\newtheorem{thm}{Theorem}[section]
\newtheorem{crl}[thm]{Corollary}
\newtheorem{lmm}[thm]{Lemma}
\newtheorem{prp}[thm]{Proposition}
\theoremstyle{definition}
\newtheorem{dfn}[thm]{Definition}
\theoremstyle{remark}
\newcommand{\rank}{\mathrm{rank}}
\renewcommand{\Re}{\mathrm{Re}}
\renewcommand{\Im}{\mathrm{Im}}
\newcommand{\grad}{\mathrm{grad}}
\renewcommand{\span}{\mathrm{span}}
\newcommand{\Sing}{\mathrm{Sing}}
\newcommand{\Hess}{\mathrm{Hess}}
\newcommand{\C}{\mathbb{C}}
\newcommand{\R}{\mathbb{R}}
\title{The singularities of the maps associated with Milnor fibrations}
\author{\textsc{Daiki Sumida}\footnote{Graduate School of Mathematics, Kyushu University, 744, Motooka, Nishi-ku, Fukuoka
819-0395, JAPAN.\newline e-mail: \texttt{d-sumida@math.kyushu-u.ac.jp}}}
\keywords{\textit{Milnor fibration, singularity, fold}:}         
\begin{document}
%

\maketitle

\begin{abstract}      
Milnor fibrations have been studied since 1960's.
 In this paper, we study singular points of differentiable maps,
 called Milnor fibration product maps, obtained by several Milnor fibrations.
 We give a characterization of singular points of such product maps,
 and for the case of certain weighted homogeneous polynomials,
 a criterion for a fold singular point together with its index.
\end{abstract}

\section{Introduction}

 For a singular point of a complex hypersurface, the intersection of the hypersurface and the sphere
 centered at the point with sufficiently small radius, is called a link. Milnor \cite{M} has constructed a theory
 of so-called ^^ ^^ Milnor fibrations'', which are locally trivial fibrations over the unit circle, where the total space
 is the complement of the link in the sphere. This is important in singularity theory, and is used,
 for example, to describe the topology of complex hypersurface singularities.

 Milnor's fibration theorem was later extended in order to describe the topology of complete intersection singularities
 of complex algebraic varieties by Hamm \cite{H}. It provides rich and important examples of locally trivial fibrations like Milnor fibrations.
 On the other hand, it is important to investigate singular points of differentiable maps in the study of
 differentiable structures of manifolds. However, in general, it is difficult to construct explicit examples
 in which the singularities are completely determined. If we can clarify the singularities of differentiable
 maps constructed by using simple objects like polynomials by means of a straightforward technique, then
 we can expect a rich family of examples of differentiable maps for which the relations among the singular points are clear.
 In this paper, we define Milnor fibration product maps, which generalize the Milnor fibrations in a direction
 different from Hamm's, and study their singularities in detail.
 A Milnor fibration product map is a differentiable map into the torus
 $T^r = S^1 \times S^1 \times \cdots \times S^1$, $r \geq 1$, which is constructed by $r$ hypersurfaces
 in complex $n$-space passing through a common point. Its source is the complement to the union of the links
 in a small sphere, and each component function is a Milnor fibration map.

 We study singular points of Milnor fibration product maps into the tori, especially those into the $2$-dimensional torus.
 First we give a necessary and sufficient condition for a point of the source to be a singular point of the map,
 applying the techniques of Milnor. As a corollary,
 we obtain examples of Milnor fibration product maps which have no singular points. On the other hand, when the polynomials
 defining the Milnor fibration product map are a pair of weighted homogeneous polynomials
 whose weights are linearly dependent, we have a corollary which refines the above necessary
 and sufficient condition. This corollary makes it easy to determine the singular point set,
 since this singular point set turns out to be contained in a certain complex algebraic set determined by the weighted homogeneous polynomials, while the singular point set is a real algebraic set in general. In addition, for such a singular point obtained by
 using this result, we give a necessary and sufficient condition for it to be a fold point
 in a  practical and efficient form. Fold singular point is one of the generic singular points of differentiable maps
 into $2$-dimensional manifolds. The key technique to obtain a characterization of a fold point for Milnor fibration product maps, is to choose good local coordinates around
 the singular point of the Milnor fibration product map, and to see if the singular point is a non-degenerate
 critical point of one of the Milnor fibration maps restricted to the Milnor fiber of the other fibration.
 Furthermore, we determine the index of such a fold singular point when the normal space of the Milnor fiber at
 the point is a complex $1$-dimensional vector space. Using these results, we give concrete examples for which Milnor fibration
 product maps have only fold points as their singular points, and determine their indices completely.
 We also give examples of Milnor fibration product maps which have singular points such that none of them are fold points.
 In the case where the pair of polynomials that
 define the Milnor fibration product map is a pair of homogeneous polynomials in two variables,
 we show that the singular point set is a union of circles and we also study the number of components.
 Finally some of these results are extended to Milnor fibration product maps into the tori of higher dimensions.

\section{Preliminaries}
\subsection{Milnor fibration product maps}
Let $m$ and $n$ be integers with $n \geq 2$ and $1 \leq m \leq 2n-1$.
Let $f_j \in \C[z_1, z_2, \ldots, z_n]$ be polynomials with $f_j(0)=0$, $j=1, 2, \ldots, m$.
 For a sufficiently small positive real number $\varepsilon$,
 set $S_\varepsilon=\left\{z\in \C^n\,|\,\left\|z\right\|=\varepsilon\right\}$
 and $K_{f_{1}f_{2}\cdots f_m}=\left\{z\in S_\varepsilon\,|\,f_{1}(z)f_{2}(z)\cdots f_m(z)=0\right\}$.
 Let us consider the map
$$\Phi:S_\varepsilon \setminus K_{f_{1}f_{2}\cdots f_m}\to S^1\times S^1\times\cdots\times S^1=T^m$$
 defined by
\begin{eqnarray*}
\Phi(z)=\left(\frac{f_{1}(z)}{\left|f_{1}(z)\right|},
\frac{f_{2}(z)}{\left|f_{2}(z)\right|},\ldots,
\frac{f_m(z)}{\left|f_m(z)\right|}\right),
\ z\in S_\varepsilon \setminus K_{f_{1}f_{2}\cdots f_m}.
\end{eqnarray*} 
We call the $C^\infty$ map $\Phi$ the \emph{Milnor fibration product map} associated with $m$ polynomials $f_1,f_2,\ldots,f_m$.

\subsection{Notations}
For complex vectors $u, v \in \C^n$, the symbol $\langle u, v \rangle$ will
 denote the usual Hermitian inner product of $u$ and $v$, i.e.
$$\langle u, v \rangle = \sum_{j=1}^n u_j \overline{v_j},$$
where $u = (u_1, u_2, \ldots, u_n)$ and $v = (v_1, v_2, \ldots, v_n)$.
Note that if we regard $u$ and $v$ as real vectors in $\C^n = \R^{2n}$,
 then their usual Euclidean inner product is given by $\Re \langle u, v \rangle$.
For a holomorphic function $h$ defined on an open subset $U$ of $\C^n$, $\Hess_p(h)$
 will denote the $n \times n$ Hessian matrix of $h$ at $p\in U$, i.e.
$$\Hess_{p}(h) = \left(\frac{\partial^2 h}{\partial z_j \partial z_k}(p) \right)_{j, k}.$$
Furthermore, for a matrix $V$, $V^T$ will denote its transposed matrix, and $\overline{V}$
 will denote its complex conjugate matrix.

\section{Singular points of Milnor fibration product maps}
Let $f$ and $g$ be polynomials in $\C\left[z_{1},z_{2},\ldots,z_{n}\right]$ with $f(0)=g(0)=0$. 
In this section, we study singular points of the Milnor fibration product map $\Phi$ associated with $f$ and $g$.

First we give a necessary and sufficient condition for a point to be a singular point of $\Phi$.
\begin{lmm}
Let $f$ and $g$ be polynomials in $\C\left[z_1,z_2,\ldots,z_n\right]$ with $f(0)=g(0)=0$. 
A point $p\in S_\varepsilon \setminus K_{fg}$ is a singular point of the Milnor fibration product map $\Phi$ associated with $f$ and $g$
 if and only if the three vectors $p,i\,\grad\log f(p)$ and $i\,\grad\log g(p)$ are linearly dependent over $\R$.
\label{31}
\end{lmm}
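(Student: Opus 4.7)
The plan is to follow Milnor's approach (from Chapter 4 of \emph{Singular Points of Complex Hypersurfaces}), applied componentwise to the pair $(f,g)$. Since the target $T^2$ has real dimension $2$ while the source has real dimension $2n-1\ge 3$, a point $p$ is singular for $\Phi$ if and only if $d\Phi_p:T_pS_\varepsilon\to T_{\Phi(p)}T^2$ fails to have rank $2$, i.e.\ the two angular functions $\theta_j=\arg f_j$ (where $f_1=f$, $f_2=g$), taken on a neighborhood of $p$ in $S_\varepsilon\setminus K_{fg}$ by choosing local branches of $\log f$ and $\log g$, have $\R$-linearly dependent differentials on $T_pS_\varepsilon$.

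Next I would compute those differentials explicitly. For a holomorphic $h$ with $h(p)\neq 0$ and any $v\in\C^n$,
\begin{equation*}
dh_p(v)=\sum_{k=1}^n v_k\,\frac{\partial h}{\partial z_k}(p)=\langle v,\overline{\grad h(p)}\rangle,
\end{equation*}
so that $d(\log h)_p(v)=\langle v,\overline{\grad\log h(p)}\rangle$. Taking imaginary parts and using $\Im\langle v,w\rangle=\Re\langle v,iw\rangle$ gives the key identity
\begin{equation*}
d(\arg h)_p(v)=\Im\bigl\langle v,\overline{\grad\log h(p)}\bigr\rangle=\Re\bigl\langle v,\,i\,\grad\log h(p)\bigr\rangle,
\end{equation*}
after absorbing the conjugation (this is the bookkeeping step one has to double-check). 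Hence on $T_pS_\varepsilon$ the two differentials are represented by the real linear forms $v\mapsto\Re\langle v,i\grad\log f(p)\rangle$ and $v\mapsto\Re\langle v,i\grad\log g(p)\rangle$.

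Finally I would translate $\R$-linear dependence of these two forms into the stated geometric condition. The tangent space $T_pS_\varepsilon$ is exactly the real orthogonal complement of $\R p$ with respect to the Euclidean inner product $\Re\langle\cdot,\cdot\rangle$, so the form $v\mapsto\Re\langle v,w\rangle$ vanishes on $T_pS_\varepsilon$ precisely when $w\in\R p$. Therefore the two forms above are $\R$-linearly dependent on $T_pS_\varepsilon$ if and only if there exist $(\alpha,\beta)\in\R^2\setminus\{0\}$ with
\begin{equation*}
\alpha\,i\,\grad\log f(p)+\beta\,i\,\grad\log g(p)\in\R\,p.
\end{equation*}
Since $p\neq 0$, this is equivalent to $\R$-linear dependence of the three vectors $p$, $i\,\grad\log f(p)$, $i\,\grad\log g(p)$, which is the claimed criterion.

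The only real obstacle is the first computation: being careful with the Hermitian convention $\langle u,v\rangle=\sum u_j\overline{v_j}$ so that the complex gradient $\grad\log f$ in the statement matches the vector appearing after taking the imaginary part. Once that identity is in place, the rest of the argument is an elementary exercise in real linear algebra on the hyperplane $T_pS_\varepsilon\subset\C^n$.
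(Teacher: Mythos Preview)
Your argument is correct and follows essentially the same route as the paper: compute the differential of each circle-valued component as $v\mapsto \Re\langle v,\,i\,\grad\log h(p)\rangle$, identify $T_pS_\varepsilon$ with the real orthogonal complement of $p$, and translate linear dependence of the two forms on that hyperplane into $\R$-linear dependence of $p$, $i\,\grad\log f(p)$, $i\,\grad\log g(p)$. The only wobble is the intermediate conjugation bookkeeping you flagged---under Milnor's convention $\grad h=(\overline{\partial h/\partial z_1},\ldots,\overline{\partial h/\partial z_n})$ one gets $dh_p(v)=\langle v,\grad h(p)\rangle$ directly (no bar), so the ``absorbing the conjugation'' step is unnecessary and your final formula $d(\arg h)_p(v)=\Re\langle v,\,i\,\grad\log h(p)\rangle$ drops out immediately.
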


\begin{proof}
For $p\in S_\varepsilon \setminus K_{fg}$,
 we have
 the differential
 $d\Phi_{p}:T_{p}(S_\varepsilon \setminus K_{fg})\to T_{\Phi(p)}T^2$ satisfies
\begin{eqnarray*}
d\Phi_{p}(v)=\left(i\Re\left\langle v,i\,\grad\log f(p)\right\rangle
\frac{f(p)}{\left|f(p)\right|},i\Re\left\langle v,i\,\grad
\log g(p)\right\rangle\frac{g(p)}{\left|g(p)\right|}\right)
\end{eqnarray*}
for $v\in T_{p}(S_\varepsilon \setminus K_{fg})$. Thus,
 $p$ is a singular point of $\Phi$ if and only if there exists
 $(\beta,\gamma)\in \R^2\setminus\left\{(0,0)\right\}$
 such that for every $v\in T_{p}(S_\varepsilon \setminus K_{fg})$,
\begin{eqnarray*}
& &\Re\left\langle v,\beta i\,\grad\log f(p)+\gamma i\,\grad\log g(p)\right\rangle \\
&=&\beta\Re\left\langle v,i\,\grad\log f(p)\right\rangle+\gamma\Re\left\langle
 v,i\,\grad\log g(p)\right\rangle=0
\end{eqnarray*}
holds.\ Note that two vectors in $\C^n$ are orthogonal to each other with respect
 to the usual Euclidean inner product if and only if the real part of their
 Hermitian inner product vanishes. Consequently,
 for every $v\in\C^n$, we have $v\in T_p(S_\varepsilon \setminus K_{fg})$ if and only if
 $\Re\left\langle v,z\right\rangle=0$ holds.
Therefore, if $p$ is a singular point, then $p$ and $\beta i\,\grad\log f(p)+\gamma i\,\grad\log g(p)$
 are linearly dependent over $\R$.
 Thus, it follows that $p,\ i\,\grad\log f(p)$ and $i\,\grad\log g(p)$ 
are linearly dependent over $\R$.

Conversely, if $p,i\,\grad\log f(p)$
 and $i\,\grad\log g(p)$ are linearly dependent over $\R$,
 then we have $\rank\,d\Phi_{p}<2$. Therefore, $p$ is a singular point of the Milnor fibration product map.
\end{proof}

The following lemma is due to Milnor \cite{M}.

\begin{lmm}
 Let $f$ be a polynomial in $\C\left[z_{1},z_{2},\ldots,z_{n}\right]$ with $f(0)=0$, where $n\geq 1$.
 A point $p\in S_\varepsilon \setminus K_{f}$ is a singular point of
 the $C^\infty$ map $\varphi_f : S_\varepsilon \setminus K_f \to S^1$ defined by
 $\varphi_f(z) = f(z)/|f(z)|,\ z \in S_\varepsilon \setminus K_f$,
 if and only if the two vectors $p$ and $i\,\grad\log f(p)$ are
 linearly dependent over $\R$.
Furthermore, for $\varepsilon>0$ sufficiently small, $\varphi_f$ has no singular point.
\label{32}
\end{lmm}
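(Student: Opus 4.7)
The plan is to treat the first equivalence as a specialization of Lemma \ref{31} to the single-component case, and to handle the second (Milnor's non-existence) statement by a curve-selection argument.

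For the first equivalence I would repeat the differential computation of Lemma \ref{31} with $g$ removed. For $p\in S_\varepsilon\setminus K_f$ and $v\in T_p(S_\varepsilon\setminus K_f)$ one obtains
$$d(\varphi_f)_p(v)=i\,\Re\left\langle v,\,i\,\grad\log f(p)\right\rangle\frac{f(p)}{\left|f(p)\right|}.$$
Thus $p$ is a singular point of $\varphi_f$ exactly when the real linear functional $v\mapsto \Re\left\langle v,\,i\,\grad\log f(p)\right\rangle$ vanishes identically on $T_p(S_\varepsilon\setminus K_f)$. Since $T_p(S_\varepsilon\setminus K_f)=T_pS_\varepsilon$ is the real orthogonal complement of $p$ with respect to $\Re\left\langle\cdot,\cdot\right\rangle$, this vanishing holds if and only if $i\,\grad\log f(p)$ is a real multiple of $p$, i.e.\ $p$ and $i\,\grad\log f(p)$ are linearly dependent over $\R$. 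This is essentially the final step of the proof of Lemma \ref{31} and introduces no new difficulty.

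For the second statement I would argue by contradiction. If singular points of $\varphi_f$ accumulate at the origin, then the real semi-algebraic set
$$\Sigma=\{z\in\C^n\setminus f^{-1}(0)\,:\,z \text{ and } i\,\grad\log f(z) \text{ are linearly dependent over } \R\}$$
(obtained by clearing the denominator in $\grad\log f = \grad f/f$) has $0$ in its closure. The Curve Selection Lemma then produces a real analytic arc $p:[0,\delta)\to\C^n$ with $p(0)=0$ and $p(t)\in\Sigma$ for $t>0$. On this arc write $i\,\grad\log f(p(t))=\lambda(t)\,p(t)$ for a real analytic function $\lambda(t)$, and evaluate $(d/dt)\log f(p(t))$ via the complex chain rule combined with this identity. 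Separating real and imaginary parts gives formulas for $(d/dt)\log|f(p(t))|$ and $(d/dt)\Arg f(p(t))$ that force $\log|f(p(t))|$ to be strictly monotone near $t=0$, contradicting $f(p(t))\to 0$ as $t\to 0^+$.

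The main obstacle will be producing the monotonicity in the last step, since it demands careful bookkeeping of the real and imaginary parts of the Hermitian pairing $\langle p'(t),\overline{\grad\log f(p(t))}\rangle$ along the arc, combined with the relation $\Re\langle p'(t),p(t)\rangle = \tfrac{1}{2}(d/dt)\|p(t)\|^2$. However, this is precisely the classical argument of Milnor \cite{M}, so I would appeal to that reference rather than reproduce the computation.
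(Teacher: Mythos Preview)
The paper does not prove this lemma at all: it simply states it and attributes it to Milnor \cite{M}. So there is nothing to compare your argument against beyond the citation itself, and your decision to ultimately appeal to \cite{M} for the hard part is exactly what the paper does.

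Your treatment of the first equivalence is correct and is precisely the $m=1$ case of the computation in Lemma~\ref{31}.

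One caution about your sketch of the curve-selection argument: the specific contradiction you name does not work. Strict monotonicity of $\log|f(p(t))|$ is perfectly compatible with $f(p(t))\to 0$ (indeed $\log|f(p(t))|\to -\infty$ monotonically is exactly what one expects). Milnor's actual argument runs differently: writing $\grad\log f(p(t))=\mu(t)\,p(t)$ and comparing the leading Puiseux terms of $f(p(t))$ and of $\|p(t)\|^2$ (both real analytic, both vanishing at $t=0$), one finds that $\mu(t)$ must have argument tending to $0$, i.e.\ $\mu$ is asymptotically a positive real number. This rules out the pure-imaginary case $\mu=-i\lambda$, $\lambda\in\R$, that corresponds to $p$ and $i\,\grad\log f(p)$ being $\R$-dependent. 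Since you explicitly defer the details to \cite{M}, this slip in the sketch is not fatal, but you should correct the description if you retain it.
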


We have the following proposition to Lemmas \ref{31} and \ref{32}.

\begin{prp}
Let $f$ be a polynomial in $\C\left[z_{1},z_{2},\ldots,z_m\right]$
and $g$ be a polynomial in $\C\left[z_{m+1},z_{m+2},\ldots,z_{n}\right]$ with $f(0)=g(0)=0$.
We regard $f$ and $g$ as polynomials in $\C\left[z_{1},z_{2},\ldots,z_{n}\right]$.
 Then, for $\varepsilon>0$ sufficiently small,
 the Milnor fibration product map $\Phi: S_\varepsilon \setminus K_{fg} \to T^2$
 associated with $f$ and $g$ has no singular point.
\label{33}
\end{prp}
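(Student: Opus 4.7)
The plan is to exploit the disjoint-variable structure of $f$ and $g$ in order to reduce the question to Milnor's Lemma~\ref{32} applied separately to $f$ on spheres in $\C^m$ and to $g$ on spheres in $\C^{n-m}$. Decompose each point $p\in S_\varepsilon\setminus K_{fg}$ as $p=(p',p'')\in\C^m\times\C^{n-m}$. Since $f(p)=f(p')\neq 0$ and $g(p)=g(p'')\neq 0$, while $f(0)=g(0)=0$, both $p'$ and $p''$ must be nonzero, so $0<\|p'\|,\|p''\|<\varepsilon$. Because $f$ depends only on the first $m$ variables and $g$ only on the last $n-m$, the gradients in $\C^n$ have the block forms
\begin{eqnarray*}
\grad\log f(p)=(\grad\log f(p'),\,0),\qquad \grad\log g(p)=(0,\,\grad\log g(p'')).
\end{eqnarray*}

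Assume $p$ is a singular point. By Lemma~\ref{31} there exist real numbers $\alpha,\beta,\gamma$, not all zero, with
\begin{eqnarray*}
\alpha p+\beta i\grad\log f(p)+\gamma i\grad\log g(p)=0,
\end{eqnarray*}
and the block decomposition above decouples this single $\C^n$-equation into
\begin{eqnarray*}
\alpha p'+\beta i\grad\log f(p')=0,\qquad \alpha p''+\gamma i\grad\log g(p'')=0.
\end{eqnarray*}

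I would close the argument by a short case analysis. If $\alpha\neq 0$, then each equation exhibits an $\R$-linear dependence of $p'$ with $i\grad\log f(p')$ and of $p''$ with $i\grad\log g(p'')$; these contradict Lemma~\ref{32} applied to $f\in\C[z_1,\ldots,z_m]$ on the sphere of radius $\|p'\|$ in $\C^m$ and to $g\in\C[z_{m+1},\ldots,z_n]$ on the sphere of radius $\|p''\|$ in $\C^{n-m}$. If $\alpha=0$, then $(\beta,\gamma)\neq(0,0)$, and at least one of $\grad\log f(p')=0$ or $\grad\log g(p'')=0$ holds; since the zero vector is trivially $\R$-dependent with the nonzero $p'$ (resp.\ $p''$), this too is ruled out by Lemma~\ref{32}.

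The only delicate point, which I regard as the main obstacle, is that Lemma~\ref{32} is phrased for one small sphere $S_\varepsilon$, whereas the argument requires the non-existence of singular points of $\varphi_f$ on \emph{every} sphere $S_\rho$ with $0<\rho\leq\varepsilon$ (and similarly for $\varphi_g$). This uniformity is inherent in Milnor's proof: the real algebraic locus where $z$ and $i\grad\log f(z)$ are $\R$-dependent with $f(z)\neq 0$ cannot accumulate at the origin, by the curve selection lemma, hence is bounded away from $0$. Taking $\varepsilon$ smaller than the resulting thresholds for both $f$ and $g$ completes the proof.
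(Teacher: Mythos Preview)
Your proof is correct and follows essentially the same route as the paper: decompose $p=(p',p'')$, use the block form of the gradients to decouple the linear-dependence relation from Lemma~\ref{31}, and then invoke Lemma~\ref{32} on the smaller spheres in $\C^m$ and $\C^{n-m}$, with $\varepsilon$ chosen uniformly so that Milnor's lemma applies on every $S_\rho$ with $0<\rho\le\varepsilon$. The paper handles the last step exactly as you suggest, simply stipulating such an $\varepsilon$ at the outset; the only cosmetic difference is that your case split on $\alpha$ is unnecessary, since regularity of $p'$ (resp.\ $p''$) means $p'$ and $i\,\grad\log f(p')$ are $\R$-linearly \emph{independent}, so $\alpha p'+\beta i\,\grad\log f(p')=0$ forces $\alpha=\beta=0$ directly.
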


\begin{proof}
Let $\hat{f}$ (or $\hat{g}$) be the polynomial $f$ (resp.\ $g$) considered as
 an element of $\C[z_1, z_2, \ldots, z_m]$ (resp.\ $\C[z_{m+1}, z_{m+2}, \ldots, z_n]$).
 Let $\varepsilon$ be a small positive real number such that for every positive real number
 $\delta\leq\varepsilon$,
 $\varphi_{\hat{f}}=\hat{f}/|\hat{f}|:S_{\delta}^{2m-1}\setminus K_{\hat{f}}\to S^1$
 and $\varphi_{\hat{g}}=\hat{g}/\left|\hat{g}\right|:S_{\delta}^{2(n-m)-1}\setminus K_{\hat{g}}\to S^1$ 
have no singular points, where $S_{\delta}^{2m-1}\subset \C^m$ and
 $S_{\delta}^{2(n-m)-1}\subset \C^{n-m}$ are the spheres of radius $\delta$
centered at the origins. Let $p=(p_1,p_2,\ldots,p_n)$ be a point in $S_\varepsilon \setminus K_{fg}$
 satisfying, for $(\alpha,\beta,\gamma)\in \R^{3}$,
$$\alpha p+\beta i\,\grad\log f(p)+\gamma i\,\grad\log g(p)=0.$$
Then, the points  $p_{(1)}=(p_1,p_2,\ldots,p_m)$ and $p_{(2)}=(p_{m+1},p_{m+2},\ldots,p_n)$ satisfy the equations

\begin{displaymath}
\left\{
\begin{array}{l}
\displaystyle{
\alpha p_{(1)}+\frac{\beta i}{\,\overline{\hat{f}(p_{(1)})}\,}
\left(\overline{\frac{\partial\hat{f}(p_{(1)})}{\partial z_{1}}},
\overline{\frac{\partial\hat{f}(p_{(1)})}{\partial z_{2}}},
\ldots,\overline{\frac{\partial\hat{f}(p_{(1)})}{\partial z_m}}\right)=0}
,\\
\displaystyle{
\alpha p_{(2)}+\frac{\gamma i}{\,\overline{\hat{g}(p_{(2)})}\,}
\left(\overline{\frac{\partial\hat{g}(p_{(2)})}{\partial z_{m+1}}},
\overline{\frac{\partial\hat{g}(p_{(2)})}{\partial z_{m+2}}},
\ldots,\overline{\frac{\partial\hat{g}(p_{(2)})}{\partial z_{n}}}\right)=0}.
\end{array}
\right.
\end{displaymath}
Note that the points $p_{(1)}$ and $p_{(2)}$ are not the origins,
 since we have $\hat{f}(p_{(1)})\neq0$ and $\hat{g}(p_{(2)})\neq0$.
 Thus, we have $0<\varepsilon_{(1)}=|p_{(1)}|,\varepsilon_{(2)}=|p_{(2)}|\leq\varepsilon$.
 From the assumption on $\varepsilon$, it follows that $p_{(1)}$ is a regular point of
 $\varphi_{\hat{f}}:S_{\varepsilon_{(1)}}^{2m-1}\setminus K_{\hat{f}}\to S^{1}$
 and $p_{(2)}$ is a regular point of $\varphi_{\hat{g}}:S_{\varepsilon_{(2)}}^{2(n-m)-1}\setminus K_{\hat{g}}\to S^{1}$
. From Lemma \ref{32}, it follows that $(\alpha,\beta,\gamma)=(0,0,0)$, i.e.
 $p,i\,\grad\log f(p)$ and $i\,\grad\log g(p)$
 are linearly independent over $\R$. Therefore, $p$
 is a regular point of the Milnor fibration product map $\Phi$ associated with $f$ and $g$ by Lemma \ref{31}.
\end{proof}

 We can extend Lemma \ref{31} and Proposition \ref{33}
 to the case of Milnor fibration product maps associated with $m$ polynomials
 for general $m \geq 2$,
 which will be proved in Section $5$. 
 
Let us define weighted homogeneous polynomials.

\begin{dfn}
A polynomial $f\in \C[z_1, z_2, \ldots, z_n]$ is called a \emph{weighted homogeneous polynomial}
 if there exists a sequence $w_{f}=(w_1,w_2,\ldots,w_n)$ of positive rational numbers,
 called \emph{weights}, such that
\begin{eqnarray*}
\displaystyle \sum_{j=1}^{n}\frac{z_{j}}{w_{j}}\frac{\partial f}{\partial z_{j}}=f
\end{eqnarray*}
holds.
\end{dfn}

For a weighted homogeneous polynomial $f$,
 the weights of $f$ are
 not uniquely determined in general.
 For example, $f=z_1z_2$ is a weighted homogeneous polynomial in $\C[z_1,z_2]$
 with weights $w_f=(a,a/(a-1))$ for every rational number $a>1$.

The following characterizations of weighted homogeneous polynomials are known \cite{M}.

\begin{lmm}
For a polynomial $f$ in $\C\left[z_{1},z_{2},\ldots,z_{n}\right]$,
 the following conditions are equivalent.
\begin{itemize}
\item The polynomial $f$ is a weighted homogeneous with weights $w_{f}=(w_1,w_2,\ldots,w_n)$.
\item We have $f(z_{1}e^{c/w_1},z_{2}e^{c/w_2},\ldots,z_{n}e^{c/w_n})=e^{c}f(z_1,z_2,\ldots,z_n)$ 
for every $c\in \C$.
\item For every non-zero monomial $Cz_{1}^{b_1}z_{2}^{b_2}\cdots z_{n}^{b_n}$ of $f$, we have
$$\displaystyle \sum_{j=1}^{n}\frac{b_{j}}{w_{j}}=1.$$
\end{itemize}
\end{lmm}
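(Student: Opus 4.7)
The plan is to establish the cyclic chain of implications $(1) \Rightarrow (2) \Rightarrow (3) \Rightarrow (1)$, where I number the three bulleted conditions in the order they appear.

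For $(1) \Rightarrow (2)$, I would fix $z = (z_1, \ldots, z_n) \in \C^n$ and introduce the holomorphic function of a single variable
$$g(c) = f(z_1 e^{c/w_1}, z_2 e^{c/w_2}, \ldots, z_n e^{c/w_n}).$$
Differentiating in $c$ and applying the Euler-type identity in $(1)$ at the rescaled point $(z_1 e^{c/w_1}, \ldots, z_n e^{c/w_n})$ yields the ODE $g'(c) = g(c)$, with initial condition $g(0) = f(z)$. Uniqueness of solutions gives $g(c) = e^c f(z)$, which is $(2)$.

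For $(2) \Rightarrow (3)$, I would expand $f = \sum_{\beta} C_{\beta} z^{\beta}$ as a finite sum of monomials with multi-indices $\beta = (b_1, \ldots, b_n)$. Substituting the rescaled variables into the identity in $(2)$ produces
$$\sum_{\beta} C_{\beta} \, e^{c \sum_j b_j / w_j} \, z^{\beta} = e^c \sum_{\beta} C_{\beta} z^{\beta},$$
valid for every $c \in \C$ and every $z \in \C^n$. Treating $c$ as a parameter and comparing the coefficient of $z^{\beta}$ on each side for every $\beta$ with $C_{\beta} \neq 0$ forces $e^{c \sum_j b_j / w_j} = e^c$ as a function of $c$, hence $\sum_j b_j/w_j = 1$.

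For $(3) \Rightarrow (1)$, I would apply the operator $\sum_j (z_j/w_j)\,\partial/\partial z_j$ to each monomial of $f$ separately: on $C z_1^{b_1} \cdots z_n^{b_n}$ it produces the scalar factor $\sum_j b_j/w_j$, which equals $1$ by $(3)$, so the operator acts as the identity on each monomial and therefore on $f$ itself. Summing over all monomials yields $(1)$.

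No step presents a serious obstacle; the argument is essentially a sequence of formal manipulations, with the only non-algebraic ingredient being the ODE uniqueness argument used in $(1) \Rightarrow (2)$. The place that requires the most care is the coefficient comparison in $(2) \Rightarrow (3)$, where one must compare coefficients in the $z$-variables while $c$ is an arbitrary parameter, and then separately use the fact that an equality of exponentials $e^{\alpha c} = e^{c}$ for all $c \in \C$ forces $\alpha = 1$.
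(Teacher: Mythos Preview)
Your proof is correct and complete; the cyclic chain $(1)\Rightarrow(2)\Rightarrow(3)\Rightarrow(1)$ via the ODE $g'(c)=g(c)$, coefficient comparison, and termwise application of the Euler operator is the standard argument. The paper itself does not supply a proof of this lemma at all---it merely records the statement as ``known'' and cites Milnor's book \cite{M}---so there is no in-paper proof to compare against.
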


 For certain weighted homogeneous polynomials $f$ and $g$, we have the following corollary to Lemma \textup{3.1}.

\begin{crl}
Let $f$ and $g$ be weighted homogeneous polynomials
 with weights $w_{f}$ and $w_{g}$, respectively.
 Suppose that there exists a positive rational number $s$ such that $w_{g}=sw_{f}$.
Then a point $p\in S_\varepsilon \setminus K_{fg}$ is a singular point of 
the Milnor fibration product map $\Phi: S_\varepsilon \setminus K_{fg} \to T^2$ associated with $f$ and $g$ if and only if
$$
\frac{\partial f}{\partial z_{j}}(p)\frac{\partial g}{\partial z_{k}}(p)
-\frac{\partial f}{\partial z_{k}}(p)\frac{\partial g}{\partial z_{j}}(p)=0
$$
holds for all $j,k=1,2,\ldots,n$.
\end{crl}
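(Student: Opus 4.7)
The plan is to derive the corollary from Lemma \ref{31} by contracting the $\R$-linear dependence against the ``Euler vector''
\[
q := (p_1/w_{f,1},\, p_2/w_{f,2},\, \ldots,\, p_n/w_{f,n}).
\]
Using the defining identity of a weighted homogeneous polynomial together with the hypothesis $w_g = s\,w_f$, one computes the three Hermitian identities
\[
\langle q, \grad\log f(p)\rangle = 1, \qquad \langle q, \grad\log g(p)\rangle = s, \qquad \langle q, p\rangle = \sum_{j=1}^{n} |p_j|^2/w_{f,j} > 0.
\]
Beyond Lemma \ref{31}, these are the only inputs the proof will use.

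For the forward direction, I would take a non-trivial real relation $\alpha p + \beta\,i\,\grad\log f(p) + \gamma\,i\,\grad\log g(p) = 0$ guaranteed by Lemma \ref{31}, apply $\langle q, \cdot\rangle$ to both sides, and separate the resulting complex scalar equation into its real and imaginary parts. The real part forces $\alpha = 0$ (since $\langle q, p\rangle > 0$) and the imaginary part forces $\beta + s\gamma = 0$. Substituting back reduces the relation to $\grad\log g(p) = s\,\grad\log f(p)$, which is precisely the $\C$-linear dependence of $\grad f(p)$ and $\grad g(p)$ in $\C^n$, equivalently the simultaneous vanishing of every $2\times 2$ minor of the Jacobian of $(f,g)$ at $p$.

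For the converse, the Euler identity guarantees $\grad f(p), \grad g(p) \neq 0$ (since $f(p), g(p) \neq 0$), so the minor hypothesis gives $\grad\log g(p) = \mu\,\grad\log f(p)$ for some $\mu \in \C^{\times}$. Applying $\langle q, \cdot\rangle$ and using the conjugate-linearity of the Hermitian form in its second argument yields $s = \overline{\mu}$, so $\mu = s$ is real. Hence $i\,\grad\log g(p) - s\cdot i\,\grad\log f(p) = 0$ is a non-trivial real linear dependence, and Lemma \ref{31} concludes that $p$ is singular.

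The only delicate point is the conjugate/sign bookkeeping coming from the Hermitian inner product being conjugate-linear in its second argument combined with $\grad$ being defined through complex conjugates; once the three $q$-identities above are carefully verified, each direction of the corollary is only a few lines of computation.
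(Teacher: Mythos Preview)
Your argument is correct and follows essentially the same route as the paper: both introduce the Euler vector $w(p)=(p_1/w_1,\ldots,p_n/w_n)$, establish the identities $\langle w(p),\grad\log f(p)\rangle=1$, $\langle w(p),\grad\log g(p)\rangle=s$, $\langle w(p),p\rangle>0$, and then contract the linear relation from Lemma~\ref{31} against $w(p)$ to force $\alpha=0$. The only cosmetic differences are that in the forward direction you also read off $\beta+s\gamma=0$ from the imaginary part and obtain the sharper conclusion $\grad\log g(p)=s\,\grad\log f(p)$ (the paper is content with $\C$-linear dependence, which already yields the minor condition), and that in the converse you normalize to a single scalar $\mu$ whereas the paper works with a pair $(\beta,\gamma)\in\C^2$ and clears denominators by multiplying by $\overline{\gamma}$; these are equivalent bookkeeping choices.
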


\begin{proof}
Set $z=(z_{1},z_{2},\ldots,z_{n})$. Let $w(z)$ be the vector defined by
$$w(z)=\left(\frac{z_1}{w_1},\frac{z_2}{w_2},\ldots,\frac{z_n}{w_n}\right)$$
for the weights $w_f=(w_1,w_2,\ldots,w_n)$.
 From the definition of weighted homogeneous polynomials, it follows that
\begin{eqnarray*}
\left\langle w(z),\grad\log f\right\rangle
= \sum_{j=1}^n\frac{z_{j}}{w_{j}}\frac{1}{f}\frac{\partial f}{\partial z_{j}}
= \frac{1}{f}\sum_{j=1}^n\frac{z_{j}}{w_{j}}\frac{\partial f}{\partial z_{j}}=1,\\
\left\langle w(z),\grad\log g\right\rangle
= \sum_{j=1}^n\frac{z_{j}}{w_{j}}\frac{1}{g}\frac{\partial g}{\partial z_{j}}
= \frac{s}{g}\sum_{j=1}^n\frac{z_{j}}{sw_{j}}\frac{\partial g}{\partial z_{j}}=s.
\end{eqnarray*}
For a singular point $p=(p_{1},p_{2},\ldots,p_{n})$ of $\Phi$,
 there exists $(\alpha,\beta,\gamma)\in \R^{3}\setminus\left\{(0,0,0)\right\}$
 such that $\alpha p+\beta i\,\grad\log f(p)+\gamma i\,\grad\log g(p)=0$ by Lemma \textup{3.1}.
 From this fact, it follows that
\begin{equation}
\alpha\left\langle w(p),p\right\rangle-i(\beta+\gamma s)=\left\langle w(p),\alpha p
+\beta i\,\grad\log f(p)+\gamma i\,\grad\log g(p)\right\rangle=0.
\end{equation}
We see that $\langle w(p),p \rangle$ is a positive real number, since
$$\left\langle w(p),p\right\rangle=\sum_{j=1}^{n}\frac{p_{j}}{w_{j}}
\overline{p_{j}}=\sum_{j=1}^{n}\frac{1}{w_{j}}\left|p_{j}\right|^{2}.$$
From this fact and formula (3.1), it follows that $\alpha=0$.
 Therefore, $\grad\log f(p)$ and $\grad\log g(p)$ are linearly dependent over $\C$,
 and $p$ satisfies the required condition.

Conversely, if $p$ satisfies the condition of Corollary \textup{3.6},
 i.e.\ if there exists $(\beta,\gamma)\in \C^2 \setminus\left\{(0,0)\right\}$ such that
\begin{equation}
\beta\,\grad\log f(p)+\gamma\,\grad\log g(p)=0
\end{equation}
holds, then it follows that $\beta=-\gamma s\neq 0$, since
$$
\overline{\beta}+\overline{\gamma}s=\left\langle w(p),\beta\,\grad\log f(p)
+\gamma\,\grad\log g(p)\right\rangle=0.
$$
From equation (3.2) and the equation $\overline{\gamma}(\beta,\gamma)=(-|\gamma|^2s,|\gamma|^2)$,
 it follows that $p$ satisfies the equation
$$-|\gamma|^2 s i\,\grad\log f(p)+|\gamma|^2 i\,\grad\log g(p)=0.$$
We see that the three vectors $p,i\,\grad\log f(p),i\,\grad\log g(p)$ are
 linearly dependent over $\R$, since $(0,-|\gamma|^2s,|\gamma|^2)$ is a non-zero real vector.
 Therefore, $p$ is a singular point of the Milnor fibration product map $\Phi$
 associated with $f$ and $g$ by Lemma \textup{3.1}.
\end{proof}

We have the following theorem, which gives a necessary and sufficient condition
 for a singular point of a Milnor fibration product map to be a fold point.

\begin{thm}
Let $f$ and $g \in \C\left[z_1,z_2,\ldots,z_n\right]$ be weighted homogeneous polynomials in
  with weights $w_f$ and $w_g$,
 respectively, such that $w_g=sw_f$, where $s$ is a positive rational number.
Let $p$ be a singular point of the Milnor fibration product map $\Phi:S_\varepsilon \setminus K_{fg}\to T^{2}$
associated with $f$ and $g$.
 Let $V=(r_1^T,r_2^T,\ldots,r_{2n-2}^T)$ be an $ n \times (2n-2)$ complex matrix, where $r_1,r_2,\ldots,r_{2n-2}$ constitute
 a real basis of the real $(2n-2)$-dimensional vector space
\begin{eqnarray*}
\left\{v\in \C^{n}\,|\,\Re\left\langle v,p\right\rangle
=\Re\left\langle v,i\grad\log f(p)\right\rangle=0\right\}.
\end{eqnarray*}
 Then $p$ is a fold point of $\Phi$ if and only if
\begin{eqnarray*}
\det \Re ( V^T \Hess_{p}(-i(\log g -s \log f)) V)\neq 0
\end{eqnarray*}
holds.
\end{thm}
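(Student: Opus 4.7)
My plan is to reduce the fold condition for $\Phi$ at $p$ to a Morse-criticality condition for $\varphi_g$ restricted to the Milnor fiber of $\varphi_f$ through $p$, and then re-express this condition in the compact holomorphic form appearing in the statement.

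First, by Lemma \ref{32}, for $\varepsilon$ sufficiently small $\varphi_f=f/|f|$ is a submersion near $p$, so the local level set $F_p:=\varphi_f^{-1}(\varphi_f(p))\cap(S_\varepsilon\setminus K_{fg})$ is a smooth real submanifold of dimension $2n-2$. Its tangent space at $p$ is precisely $\{v\in\C^n:\Re\langle v,p\rangle=\Re\langle v,i\,\grad\log f(p)\rangle=0\}$, so $r_1,\ldots,r_{2n-2}$ form a real basis of $T_pF_p$. Since $\Phi=(\varphi_f,\varphi_g)$ and $\varphi_f$ is a submersion, singularity of $\Phi$ at $p$ is equivalent to $\varphi_g|_{F_p}$ having a critical point at $p$, and the standard local-normal-form description of fold points of a map into a $2$-manifold shows that $p$ is a fold of $\Phi$ if and only if this critical point of $\varphi_g|_{F_p}$ is Morse, i.e.\ its real Hessian on $T_pF_p$ is non-degenerate.

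Next, I would lift $\varphi_g$ locally to the real-valued function $\arg g=\Im\log g$. Since $\arg f$ is constant along $F_p$, on $F_p$ one has
\begin{eqnarray*}
\arg g=\Im(\log g-s\log f)+\mathrm{const}=\Re(-ih)+\mathrm{const},
\end{eqnarray*}
where $h:=\log g-s\log f$. The proof of Corollary \textup{3.6} shows that at a singular point $p$ in the present weighted homogeneous setting one has $\grad\log g(p)=s\,\grad\log f(p)$, i.e.\ $\grad h(p)=0$. Consequently the real Hessian of $\Re(-ih)$ at $p$ is a well-defined symmetric bilinear form on all of $\C^n\cong\R^{2n}$, and Morse-ness of $\varphi_g|_{F_p}$ at $p$ becomes non-degeneracy of the restriction of this form to $T_pF_p=\mathrm{span}_\R\{r_1,\ldots,r_{2n-2}\}$. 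The remaining step is the identity: for any holomorphic $H$ on $\C^n$ with $\grad H(p)=0$, the real Hessian of $\Re H$ at $p$, evaluated on real tangent vectors $r,r'\in\C^n$, equals $\Re(r^T\Hess_p(H)\,r')$. This follows by expanding $H(z)=H(p)+\tfrac{1}{2}(z-p)^T\Hess_p(H)(z-p)+O(|z-p|^3)$, writing $r=u+iv$, $r'=u'+iv'$ with $u,u',v,v'\in\R^n$, and using the complex symmetry of $\Hess_p(H)$. Applying this with $H=-ih$ and the basis $r_1,\ldots,r_{2n-2}$, the Gram matrix of the Hessian of $\Re(-ih)|_{F_p}$ in this basis is $\Re(V^T\Hess_p(-ih)V)$, and non-degeneracy translates into the stated determinant condition.

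The hard part is this last identity: unpacking the compact complex expression $\Re(V^T\Hess_p(-ih)V)$ as the Gram matrix, in the basis $r_1,\ldots,r_{2n-2}$, of the real Hessian of the real-valued function $\Im h=\Re(-ih)$ on $\R^{2n}$. It amounts to a short but delicate calculation comparing $\Re(r^T A r')$ (for complex symmetric $A$) with the block form of the real Hessian in real coordinates $(\Re z,\Im z)$. A secondary matter is justifying the fold-versus-Morse reduction, which follows by taking $\varphi_f$ as a local coordinate on the source and invoking the standard local classification of rank-$1$ singularities of smooth maps into $2$-manifolds.
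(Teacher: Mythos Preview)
Your proposal is correct and follows essentially the same route as the paper: reduce the fold condition at $p$ to Morse-ness of $\Re(-i(\log g-s\log f))$ restricted to the Milnor fiber $F_p=\varphi_f^{-1}(\varphi_f(p))$, use $\grad(\log g-s\log f)(p)=0$ from Corollary~3.6 so the ambient real Hessian restricts cleanly to $T_pF_p$, and identify that restricted Hessian with $\Re(V^T\Hess_p(-ih)V)$. The only executional difference is that the paper exploits the weighted-homogeneous $\R$-action $h_t(z)=(z_je^{2\pi it/w_j})_j$ to build coordinates in which $\Phi$ splits \emph{exactly} as $(x_{2n-1},\rho(x_1,\ldots,x_{2n-2}))$, so the fold criterion becomes just Morse-ness of $\rho$; you instead invoke the general corank-one fold criterion after using only that $\varphi_f$ is a submersion, which is equally valid.
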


\begin{proof}
Let $w_f = (w_1, w_2, \ldots, w_n)$ be the weights of $f$.
For $t \in \R$, let us define the diffeomorphism $h_t: \C^n \to \C^n$ by
$$h_t(z_1, z_2, \ldots, z_n) =
\left(z_1 \exp\left(\frac{2 \pi i t}{w_1}\right),
z_2 \exp\left(\frac{2 \pi i t}{w_2}\right), \ldots,
z_n \exp\left(\frac{2 \pi i t}{w_n}\right)\right),$$
for $z = (z_1, z_2, \ldots, z_n) \in \C^n$.
Note that $||h_t(z)|| = ||z||$ and that
\begin{eqnarray*}
f(h_t(z)) & = & f(z) \exp{(2 \pi i t)}, \\
g(h_t(z)) & = & g(z) \exp{(2 \pi i st)}.
\end{eqnarray*}
Therefore, the diffeomorphism $h_t$ preserves $S_\varepsilon \setminus K_f$,
$S_\varepsilon \setminus K_g$ and $S_\varepsilon \setminus K_{fg}$,
and in particular,
it induces diffeomorphisms between Milnor fibers for $f$.

For $p \in S_\varepsilon \setminus K_{fg}$,
we choose local coordinates $(x_1, x_2, \ldots, x_{2n-1})$
around $p$ and $(y_1, y_2)$ around $\Phi(p) \in T^2$ as
follows. For the Milnor fiber $F_f$ for $f$ containing $p$,
let $(x_1, x_2, \ldots, x_{2n-2})$ be local coordinates
for $F_f$ around $p$. We extend these coordinates
to those for $S_\varepsilon \setminus K_{fg}$ around $p$
in such a way that the first $2n-2$ coordinates
$x_1, x_2, \ldots, x_{2n-2}$ are invariant under the
diffeomorphism $h_t$ for $|t|$ sufficiently small, and that
the $(2n-1)$-th coordinate $x_{2n-1}$ satisfies
$$\varphi_f(p) \exp{(2 \pi i x_{2n-1})} 
= \varphi_f(x_1, x_2, \ldots, x_{2n-1}).$$
Set $\Phi(p) = (\exp{(2 \pi i \alpha_0)}, \exp{(2 \pi i \beta_0)})$
for $\alpha_0, \beta_0 \in \R$.
We can choose local coordinates $(y_1, y_2)$ for $T^2$ around
$\Phi(p)$ so that 
$$\left(\exp{(2 \pi i (\alpha + \alpha_0))}, 
\exp{(2 \pi i (\beta + \beta_0))}\right) \in T^2$$ 
corresponds to
$$(y_1, y_2) = (\alpha, \beta - s \alpha),$$
or in other words,
$$(\alpha, \beta) = (y_1, y_2 + s y_1).$$

Then, with respect to the above local coordinates, we have
$$y\circ\Phi (0, 0, \ldots, 0, x_{2n-1})
= (x_{2n-1}, 0),$$ and $y\circ\Phi (x_1, x_2, \ldots, x_{2n-2}, 0)$
is of the form $(0, \rho(x_1, x_2, \ldots, x_{2n-2}))$, where
$\rho$ is a smooth function defined on a neighborhood
of $p$ in $F_f$.
Consequently, the differential $d\Phi_{p}$ has rank one,
and by virtue of the well-known
characterization of a fold point (for example,
see \cite{GG}), we see that
$p$ is a fold point of $\Phi$
if and only if $p \in F_f$ is a non-degenerate critical
point of $\rho$.


Note that we have 
\begin{eqnarray*}
\rho(x_1, x_2, \ldots, x_{2n-2}) & = & 
y_2 \circ \Phi(z(x_1, x_2, \ldots, x_{2n-2}, 0)) \\
& = & \Re(-i(\log g - s \log f))(x_1, x_2, \ldots, x_{2n-2}, 0).
\end{eqnarray*}
Thus, for $k = 1, 2, \ldots, 2n-2$, we have
\begin{eqnarray*}
\frac{\partial \rho}{\partial x_k}
& = & \Re \left(\sum_{a=1}^n \frac{\partial}{\partial z_a}
(-i(\log g - s \log f)) \frac{\partial z_a}{\partial x_k}\right) \\
& = & \Re \left\langle v_k, i \, 
\mathrm{grad}(\log g - s \log f) \right\rangle,
\end{eqnarray*}
where 
$$
v_k = \left( \frac{\partial z_1}{\partial x_k},
\frac{\partial z_2}{\partial x_k}, \ldots,
\frac{\partial z_n}{\partial x_k}\right).
$$
(Note that the vectors $v_1(p), v_2(p), \ldots, v_{2n-2}(p)
\in \C^n$
constitute a basis of the real $(2n-2)$-dimensional vector
space mentioned in the theorem.)
Therefore, for $m = 1, 2, \ldots, 2n-2$, we have
\begin{eqnarray*}
\frac{\partial^2 \rho}{\partial x_k \partial x_m}(p)
& = & \Re \left\langle \frac{\partial v_k}{\partial x_m}(p), i \, 
\mathrm{grad}(\log g - s \log f)(p) \right\rangle \\
& & \quad +
\Re \left\langle v_k(p), i\, \sum_{b=1}^n
\frac{\partial \,\mathrm{grad}(\log g - s \log f)}
{\partial \overline{z_b}}(p) \overline{\frac{\partial z_b}{\partial x_m}(p)} 
\right \rangle\\
& = & \Re \left(\sum_{a=1}^n \sum_{b=1}^n 
\frac{\partial z_a}{\partial x_k}(p) 
\frac{\partial^2 (-i(\log g - s \log f))}{\partial z_a \partial z_b}(p)
\frac{\partial z_b}{\partial x_m}(p)\right),
\end{eqnarray*}
since we have
$$i \,\mathrm{grad}(\log g - s \log f)(p) = 0$$
by the proof of Corollary~\textup{3.6}.
Thus, we have the desired conclusion.
\end{proof}

\begin{crl}
\label{a}
Let $f$ and $g \in \C\left[z_1,z_2,\ldots,z_n\right]$ be weighted homogeneous polynomials
 with weights $w_f$ and $w_g$,
 respectively, such that $w_g=sw_f$, where $s$ is a positive rational number.
Let $p$ be a singular point of the Milnor fibration product map $\Phi:S_\varepsilon \setminus K_{fg}\to T^{2}$
associated with $f$ and $g$ such that $p$ and
 $i\,\grad\log f(p)$ are linearly dependent over $\C$.
 Let $W=(r_1^T,r_2^T,\ldots,r_{n-1}^T)$ be an $n\times(n-1)$ complex matrix, where $r_1,r_2,\ldots,r_{n-1}$ constitute
 a complex basis of the $(n-1)$-dimensional complex vector space
$\left\{v\in \C^{n}|\left\langle v,p\right\rangle=0\right\}$.
 Then $p$ is a fold point of $\Phi$ if and only if
\begin{eqnarray*}
\det(W^{T}\Hess_{p}(-i(\log g-s\log f))W)\neq 0
\end{eqnarray*}
holds.
\end{crl}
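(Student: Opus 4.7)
The plan is to apply Theorem~\textup{3.7} with a real basis built from the given complex basis $r_1,\ldots,r_{n-1}$, and then reduce the resulting real non-degeneracy condition to its complex counterpart.

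First I would identify the $(2n-2)$-dimensional real subspace appearing in Theorem~\textup{3.7} with the complex hyperplane $\{v\in\C^n\mid\langle v,p\rangle=0\}$. For $\varepsilon>0$ sufficiently small, $\grad f(p)\neq 0$, so the hypothesis that $p$ and $i\,\grad\log f(p)$ are linearly dependent over $\C$ yields $i\,\grad\log f(p)=\mu p$ for some $\mu\in\C$. Lemma~\textup{3.2} ensures that $p$ and $i\,\grad\log f(p)$ are linearly independent over $\R$ for such $\varepsilon$, so $\mu\notin\R$. Since $\Re\langle v,i\,\grad\log f(p)\rangle=\Re(\overline{\mu}\,\langle v,p\rangle)$ for every $v\in\C^n$ and $\Im\mu\neq 0$, writing $\langle v,p\rangle=x+iy$ shows that the two real constraints $\Re\langle v,p\rangle=\Re\langle v,i\,\grad\log f(p)\rangle=0$ are jointly equivalent to the single complex equation $\langle v,p\rangle=0$. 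A dimension count gives the desired equality of real subspaces.

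Next, I would take $V$ to be the $n\times(2n-2)$ matrix whose columns are $r_1^T,(ir_1)^T,r_2^T,(ir_2)^T,\ldots,r_{n-1}^T,(ir_{n-1})^T$, which form a real basis of the hyperplane. Writing $H=\Hess_{p}(-i(\log g-s\log f))$ and $M=W^T H W$, a direct computation gives that the $(j,k)$-block of $V^T H V$ (of size $2\times 2$) equals $M_{jk}\begin{pmatrix}1 & i \\ i & -1\end{pmatrix}$, so the $(j,k)$-block of $\Re(V^T H V)$ is $\begin{pmatrix}\Re M_{jk} & -\Im M_{jk} \\ -\Im M_{jk} & -\Re M_{jk}\end{pmatrix}$.

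Finally, to determine when this matrix is non-singular, I would interpret $\Re(V^T H V)$ as the Gram matrix of the real quadratic form $q(c)=\Re(c^T M c)$ on $\C^{n-1}\cong\R^{2(n-1)}$, via the parametrization $v=Wc$. Since $\Hess_{p}$ is symmetric, so is $M$, and the associated real symmetric bilinear form is $b(c,c')=\Re(c^T M c')$. Pairing $b(c,\cdot)$ successively against $c'=e_k$ and $c'=ie_k$ for each $k$ shows that $b(c,\cdot)\equiv 0$ if and only if $Mc=0$. Hence $\det\Re(V^T H V)\neq 0$ if and only if $\det M=\det(W^T H W)\neq 0$, which combined with Theorem~\textup{3.7} gives the corollary. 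I expect the only non-routine point to be keeping the correspondence between the real basis used in $V$ and the complex basis used in $W$ consistent throughout the block calculation; the reduction via the bilinear form $b$ then dispenses with any explicit evaluation of a $2(n-1)\times 2(n-1)$ determinant.
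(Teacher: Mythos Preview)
Your proof is correct and follows the same strategy as the paper: both reduce to Theorem~\textup{3.7} after observing that, under the hypothesis $i\,\grad\log f(p)\in\C p$, the real $(2n-2)$-dimensional constraint space coincides with the complex hyperplane $\{v\mid\langle v,p\rangle=0\}$, so that $r_1,\ldots,r_{n-1},ir_1,\ldots,ir_{n-1}$ serve as a real basis. The paper simply asserts this identification, whereas you supply the justification via Lemma~\textup{3.2}. The only substantive difference is in the final step: the paper takes $V=(W,iW)$ and evaluates the determinant directly, obtaining
\[
\det\Re(V^{T}HV)=\det\begin{pmatrix}\Re(W^{T}HW)&-\Im(W^{T}HW)\\-\Im(W^{T}HW)&-\Re(W^{T}HW)\end{pmatrix}=(-1)^{n-1}\lvert\det(W^{T}HW)\rvert^{2},
\]
while you bypass this computation by interpreting $\Re(V^{T}HV)$ as the Gram matrix of the real quadratic form $c\mapsto\Re(c^{T}Mc)$ on $\C^{n-1}$ and comparing kernels. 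Your argument is slightly more conceptual; the paper's explicit formula, on the other hand, is reused in the proof of Proposition~\textup{3.9} to pin down the index of the fold point.
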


\begin{proof}
We see that $r_1,r_2,\ldots,r_{n-1},
ir_1,ir_2,\ldots,ir_{n-1}\in \C^n$
 constitute a real basis of
\begin{eqnarray*}
\left\{v\in \C^{n}\,|\,\Re \left\langle v,p\right\rangle
=\Re\left\langle v,i\grad\log f(p)\right\rangle=0\right\}.
\end{eqnarray*}
 Set $W=(r_1^T,r_2^T,\cdots,r_{n-1}^T)$ and $V=(W,iW)$.
 Putting $H=\Hess_{p}(-i(\log g-s\log f))$, we have
\begin{eqnarray*}
\det\Re(V^{T}HV)
&=& \det\Re\left((W,iW)^{T}H(W,iW)\right) \\
&=& \det
\begin{pmatrix}
\Re(W^{T}HW) & -\Im(W^{T}HW) \\
-\Im(W^{T}HW) & -\Re(W^{T}HW)
\end{pmatrix}
\\
&=& (-1)^{n-1}|\det(W^{T}HW)|^2.
\end{eqnarray*}
Therefore, by Theorem ~\textup{3.7}, we have the desired conclusion.
\end{proof}

We can determine the index of the fold point in the situation of Corollary ~\textup{3.8}
 as follows.
\begin{prp}
Let $f$ and $g\in\C[z_1,z_2,\ldots,z_n]$ be
 weighted homogeneous polynomials with weights $w_f$ and $w_g$,
 respectively, such that $w_g=sw_f$, where $s$ is a positive rational number.
 Suppose that $p \in S_\varepsilon \setminus K_{fg}$ is a fold point of
 the Milnor fibration product map associated with $f$ and $g$.
 Then, its index is equal to $n-1$, if $p$ and $i\,\grad\,\log f(p)$
 are linearly dependent over $\C$.
\end{prp}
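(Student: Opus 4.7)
The plan is to compute the signature of the Hessian of the function $\rho$ from the proof of Theorem 3.7, since the index of $p$ as a fold point of $\Phi$ is, by definition, the Morse index of $p$ as a non-degenerate critical point of $\rho$ on the Milnor fiber $F_f$. Equivalently, it is the number of negative eigenvalues of the real symmetric $(2n-2)\times(2n-2)$ matrix $\Re(V^T H V)$, where $V=(r_1^T,\ldots,r_{2n-2}^T)$ realizes a real basis of the tangent space of $F_f$ at $p$ and $H=\Hess_p(-i(\log g - s\log f))$.

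Next, I would exploit the hypothesis that $p$ and $i\,\grad\log f(p)$ are linearly dependent over $\C$ exactly as in the proof of Corollary 3.8: choose a complex basis $r_1,\ldots,r_{n-1}$ of $\{v\in\C^n\mid \langle v,p\rangle=0\}$ and form $W=(r_1^T,\ldots,r_{n-1}^T)$. Then $r_1,\ldots,r_{n-1},ir_1,\ldots,ir_{n-1}$ is a real basis of the $(2n-2)$-dimensional real subspace appearing in Theorem 3.7, and with $V=(W,iW)$ the block computation already carried out in the proof of Corollary 3.8 gives
\begin{eqnarray*}
\Re(V^T H V) = \begin{pmatrix} \Re(W^T H W) & -\Im(W^T H W) \\ -\Im(W^T H W) & -\Re(W^T H W) \end{pmatrix}.
\end{eqnarray*}
Write $A=\Re(W^T H W)$ and $B=\Im(W^T H W)$, both real symmetric of size $(n-1)\times(n-1)$, and denote the displayed matrix by $M$.

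The main step is to show that the eigenvalues of $M$ come in pairs $\pm\lambda$. For this I introduce the orthogonal matrix
\begin{eqnarray*}
K=\begin{pmatrix} 0 & I_{n-1} \\ -I_{n-1} & 0 \end{pmatrix},\qquad KK^T=I_{2n-2},
\end{eqnarray*}
and verify by a direct $2\times 2$ block multiplication that $KMK^T=-M$. Hence $M$ is orthogonally similar to $-M$, so its spectrum is symmetric about $0$.

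Finally, since $p$ is assumed to be a fold point, Theorem 3.7 (or Corollary 3.8) ensures $\det M\neq 0$, so no eigenvalue of $M$ is zero. Combined with the $\pm\lambda$ symmetry just established, this forces exactly $n-1$ positive and $n-1$ negative eigenvalues. Therefore the Morse index of $\rho$ at $p$, and hence the index of the fold point $p$, equals $n-1$. The only non-routine step is the identity $KMK^T=-M$, but this is a purely mechanical block computation once $K$ is chosen correctly, so I expect no serious obstacle.
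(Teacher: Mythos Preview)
Your argument is correct. Setting $M=\Re(V^T H V)=\begin{pmatrix} A & -B \\ -B & -A \end{pmatrix}$ with $A=\Re(W^T H W)$ and $B=\Im(W^T H W)$ (both real symmetric because $H$ is symmetric), the block computation $KMK^T=-M$ with $K=\begin{pmatrix} 0 & I_{n-1} \\ -I_{n-1} & 0 \end{pmatrix}$ checks out, and the rest follows exactly as you say.

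The paper reaches the same conclusion by a different, more computational route: it manipulates the characteristic polynomial directly and proves the identity
\[
\det\!\begin{pmatrix} tI-A & B \\ B & tI+A \end{pmatrix}
=\det\bigl(t^{2}I-(A-iB)(A+iB)\bigr)
=\det\bigl(t^{2}I-\overline{W^{T}HW}\,W^{T}HW\bigr),
\]
so every eigenvalue of $M$ is a square root of an eigenvalue of $\overline{W^{T}HW}\,W^{T}HW$, hence the spectrum of $M$ is symmetric about $0$. Your orthogonal conjugation by $K$ (essentially multiplication by $i$ on $\C^{n-1}\cong\R^{2n-2}$) yields the spectrum symmetry in one line, and is both shorter and more conceptual. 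The paper's computation gives a little extra information---an explicit description of the eigenvalues of $M$ in terms of $W^{T}HW$---but this is not needed for the proposition, so your argument is a genuine simplification.
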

\begin{proof}
We use the same notations as in the proof of Corollary ~\textup{3.8}.
 The eigenvalues of $\Re(V^{T}HV)$ are real numbers,
 since $\Re(V^{T}HV)$ is a real symmetric matrix.
 For a positive integer $m$, let $I_m$ denote the $m\times m$ identity matrix.
 Let us calculate the characteristic polynomial of $\Re(V^{T}HV)$:
\begin{eqnarray*}
& &\det\left(tI_{2n-2}-\Re(V^{T}HV)\right)\\
&=&\det
\begin{pmatrix}
tI_{n-1}-\Re(W^{T}HW) &\Im(W^{T}HW)\\
\Im(W^{T}HW) &tI_{n-1}+\Re(W^{T}HW)
\end{pmatrix}.
\end{eqnarray*}
For every non-zero complex number $t$, and $(n-1)\times (n-1)$ matrices $A,B$ and $I=I_{n-1}$,
\begin{eqnarray*}
&&
\begin{pmatrix}
I & -(1/2it)(tI-A+iB)\\
0 & I
\end{pmatrix}
\begin{pmatrix}
I & 0\\
iI & I
\end{pmatrix}
\begin{pmatrix}
tI-A & B\\
B & tI+A
\end{pmatrix}
\begin{pmatrix}
I & 0\\
iI & I
\end{pmatrix}
\\&=&
\begin{pmatrix}
0 & B-(1/2it)(tI-A+iB)(tI+A+iB)\\
2itI & tI+A+iB
\end{pmatrix}
\end{eqnarray*}
holds. Therefore,
\begin{eqnarray*}
\det
\begin{pmatrix}
tI-A & B\\
B & tI+A
\end{pmatrix}
&=&\det((tI-A+iB)(tI+A+iB)-2itB)\\
&=&\det(t^{2}I-(A-iB)(A+iB))
\end{eqnarray*}
holds. Consequently, for every non-zero complex number $t$, we have
\begin{eqnarray*}
& &\det\left(tI_{2n-2}-\Re(V^{T}HV)\right)\\
&=&\det(t^{2}I_{n-1}-\overline{W^{T}HW}W^{T}HW).
\end{eqnarray*}
By the proof of Corollary ~\textup{3.8}, this equality holds also for $t=0$. 
Thus, for every eigenvalue of $\overline{W^T H W}W^T H W$,
 its square root must be eigenvalues of $\Re(V^{T}HV)$,
 which are real numbers, since $\Re(V^{T}HV)$ is a real symmetric matrix.
 When $p$ is a fold point, the eigenvalues $\pm \lambda_1, \pm \lambda_2, \ldots, \pm \lambda_{n-1}$
 of $\Re(V^{T}HV)$ are non-zero real numbers. Therefore,
 the index of the fold point $p$ is equal to $n-1$.
\end{proof}

\section{Examples of singular points}
In this section, we give explicit examples of singular points of Milnor fibration product maps into the $2$-dimensional torus. We also clarify their fold points.
\begin{prp}
Let $f=\sum_{j=1}^{n} c_j z_j^m$ and $g=\sum_{k=1}^{n} d_k z_k^m$
 be homogeneous polynomials in $\C\left[z_1,z_2,\ldots,z_n\right]$, where
 $c_{j}\neq 0, d_{k}\neq 0$ for all $j$ and $k$, and $A_{j,k}=c_{j}d_{k}-c_{k}d_{j}\neq 0$ for $j\neq k$, and $m\geq 2$.
 Let $\Phi:S_\varepsilon \setminus K_{fg}\to T^2$ be the Milnor fibration product map associated with $f$ and $g$.
 Then, for $m=2$, $\Phi$ is a fold map. For $m>2$, $\Phi$ has no fold point, although it has singular points.
\end{prp}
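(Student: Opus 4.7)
The plan is to use Corollary~3.6 to compute the singular set of $\Phi$ explicitly, and then apply Corollary~3.8 at each singular point to decide whether it is a fold. Since $f$ and $g$ are both homogeneous of degree $m$, they are weighted homogeneous with the common weight vector $(m,m,\ldots,m)$, so the ratio $s$ appearing throughout Section~3 equals $1$. Substituting $\partial f/\partial z_j = m c_j z_j^{m-1}$ and $\partial g/\partial z_k = m d_k z_k^{m-1}$ into the vanishing condition of Corollary~3.6 yields $m^2 A_{j,k}(z_j z_k)^{m-1}=0$ for every pair $j\neq k$, and the hypothesis $A_{j,k}\neq 0$ forces $z_j=0$ or $z_k=0$ for every such pair. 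Together with $\|p\|=\varepsilon$ and $f(p)g(p)\neq 0$, this shows that the singular set of $\Phi$ is exactly the disjoint union of the $n$ coordinate circles $\{p_\ell e_\ell : |p_\ell|=\varepsilon\}$, $\ell=1,2,\ldots,n$; in particular $\Phi$ has singular points for every $m\geq 2$.

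At a singular point $p=p_\ell e_\ell$ the vector $i\,\grad\log f(p)$ has only its $\ell$-th entry non-zero, hence is $\C$-proportional to $p$, so Corollary~3.8 applies. Taking $W$ whose columns are the standard basis vectors other than $e_\ell$ gives a complex basis of $\{v\in\C^n:\langle v,p\rangle=0\}$, and then $W^T H W$ is exactly the $(n-1)\times(n-1)$ principal submatrix of $H=\Hess_p(-i(\log g-\log f))$ obtained by deleting the $\ell$-th row and column. The problem therefore reduces to evaluating $\partial^2(\log g-\log f)/\partial z_a\partial z_b(p)$ for $a,b\neq\ell$.

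Using $\partial^2 \log f/\partial z_a\partial z_b = -(1/f^2)(\partial f/\partial z_a)(\partial f/\partial z_b)+(1/f)(\partial^2 f/\partial z_a\partial z_b)$, the first summand vanishes at $p$ for $a,b\neq\ell$ because $\partial f/\partial z_a(p)=m c_a p_a^{m-1}=0$, and the second contributes a diagonal matrix whose $a$-th entry is proportional to $p_a^{m-2}$ (the identical computation for $g$ gives a parallel statement). For $m>2$ one has $p_a^{m-2}=0$ for all $a\neq\ell$, so $W^T H W$ is the zero matrix and Corollary~3.8 forces $p$ not to be a fold; for $m=2$ the convention $p_a^0=1$ makes the $a$-th diagonal entry of $W^T H W$ a non-zero constant multiple of $A_{a,\ell}/(p_\ell^2 c_\ell d_\ell)$, which is non-zero by hypothesis, so $\det(W^T H W)\neq 0$ and every singular point is a fold, i.e.\ $\Phi$ is a fold map. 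The only substantive step is the clean $m=2$ versus $m>2$ dichotomy arising from the factor $p_a^{m-2}$, which I expect to be the main point to spell out carefully; the rest is direct substitution into Corollaries~3.6 and~3.8 and should pose no conceptual obstacle.
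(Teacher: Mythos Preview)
Your proposal is correct and follows essentially the same route as the paper: identify the singular set via Corollary~3.6 (yielding the $n$ coordinate circles), verify that $i\,\grad\log f(p)$ is $\C$-proportional to $p$ so that Corollary~3.8 applies, and then compute the relevant $(n-1)\times(n-1)$ block of $\Hess_p(-i(\log g-\log f))$ to obtain the $p_a^{m-2}$ dichotomy. The only cosmetic differences are your choice of $W$ with unscaled columns $e_j$ (the paper scales by $\varepsilon e^{i\theta}$) and your explicit use of the formula for $\partial^2\log f/\partial z_a\partial z_b$, whereas the paper simply records the Hessian ``by direct computation.''
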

\begin{proof}
The polynomials $f$ and $g$ are weighted homogeneous with weights $w_{f}=w_{g}=(m,m,\ldots,m)$.
 Therefore, it satisfies the condition of Corollary ~\textup{3.6}. Let $\Sing(\Phi)$
 be the singular point set of $\Phi$. Then we see that
\begin{eqnarray*}
\Sing(\Phi)=\{\varepsilon e^{i\theta} e_u \,|\, \theta \in \R, 1 \leq u \leq n\}
\end{eqnarray*}
by Corollary ~\textup{3.6}, where
\begin{eqnarray*}
e_1=(1 , 0 , \ldots , 0),e_2=(0,1,\ldots,0),\ldots,e_n=(0,0,\ldots,1)\in\C^{n}.
\end{eqnarray*}
Note that $\Sing(\Phi)$ consists of $n$ circles.
 Then, every singular point $p$ of $\Phi$ satisfies
\begin{eqnarray*}
i\,\grad\displaystyle \log f(p)=\frac{mi}{\varepsilon^2}p,
\end{eqnarray*}
i.e., the hypothesis of Corollary ~\textup{3.8} is satisfied.
 For $p=\varepsilon e^{i\theta} e_u\in \Sing(\Phi)$, we see that
\begin{eqnarray*}
\span_{\C} \{\varepsilon e^{i\theta}e_j\,|\,j\neq u,\ 1\leq j \leq n\}
\end{eqnarray*}
is the complex orthogonal complement to 
the complex vector space spanned by $p$ for $u=1,2,\ldots,n$.
 Let $W$ be the $n\times (n-1)$ complex matrix given by
\begin{eqnarray*}
W=\varepsilon e^{i\theta}(e_1,\cdots,e_{u-1},e_{u+1},\cdots,e_n).
\end{eqnarray*}
 Furthermore, by direct computation, we have
\begin{displaymath}\Hess_{p}(-i(\log g-s\log f))=
\begin{cases}
\displaystyle{\frac{2ie^{-2i\theta}}{\varepsilon^2 c_{u}d_{u}}(\delta_{j,k}A_{j,u})_{j,k=1}^n}, & m=2, \\
\hspace{15mm}0, & m>2,
\end{cases}
\end{displaymath}
for $p=\varepsilon e^{i\theta}e_{u}$, where
\begin{eqnarray*}
\delta_{j,k}=
\begin{cases}
1, & j=k, \\
0, & j\neq k.
\end{cases}
\end{eqnarray*}
Note that for $m=2$, this is a diagonal matrix with non-zero diagonals
 except for the $(u, u)$-component. Then we have
\begin{displaymath}W^{T}\Hess_{p}(-i(\log g-s\log f))W=
\begin{cases}
\displaystyle{\frac{2i}{c_{u}d_{u}}}(\delta_{j,k}A_{j,u})_{j,k\neq u}, & m=2,\\
\hspace{15mm}0, & m>2,
\end{cases}
\end{displaymath}
where $(\delta_{j,k}A_{j,u})_{j,k\neq u}$ is the $(n-1)\times (n-1)$ diagonal matrix
 which has non-zero diagonal entries $A_{1,u},\ldots,A_{u-1,u},A_{u+1,u},\ldots,A_{n,u}$. 
Therefore, by Corollary ~\textup{3.8}, none of the singular point of $\Phi$
 is a fold point for $m>2$. Furthermore, $\Phi$ is a fold map for $m=2$
 by our assumption on the coefficients $c_{j}$ and $d_{k}$.
\end{proof}
\begin{prp}
Let $f=z_1^m+z_2^m$ and $g=z_1z_2$ be polynomials in $\C[z_1,z_2]$.
 The Milnor fibration product map
 $\Phi:S_\varepsilon \setminus K_{fg}\to T^2$ associated with $f$ and $g$
 is a fold map for all $m \geq 2$.
 Furthermore, the indices of the fold points are all equal to $1$.
\end{prp}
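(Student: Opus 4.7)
The plan is to apply Corollaries~\textup{3.6}, \textup{3.8} and Proposition~\textup{3.9} to the explicit pair $(f,g)$. First, $f$ is weighted homogeneous with weights $w_f=(m,m)$ and $g$ with $w_g=(2,2)$, so $w_g=sw_f$ with $s=2/m$, placing us in the setting of those results.

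Next I would locate $\Sing(\Phi)$ via Corollary~\textup{3.6}. The single Jacobian minor condition reduces to
\begin{eqnarray*}
\frac{\partial f}{\partial z_1}\frac{\partial g}{\partial z_2}-\frac{\partial f}{\partial z_2}\frac{\partial g}{\partial z_1}=m(z_1^m-z_2^m)=0,
\end{eqnarray*}
so $z_1=\zeta z_2$ with $\zeta^m=1$. Combined with $p\in S_\varepsilon$ and the open conditions $z_1z_2\neq 0$ and $z_1^m+z_2^m=2z_2^m\neq 0$, this yields exactly $m$ disjoint circles, indexed by the $m$-th roots of unity $\zeta$, on which $|z_2|=\varepsilon/\sqrt{2}$ and $z_2$ runs through a full phase. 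A short direct computation using $\overline{\zeta}=\zeta^{m-1}$ then gives $i\,\grad\log f(p)=(mi/\varepsilon^2)p$ at each $p=z_2(\zeta,1)$, so $p$ and $i\,\grad\log f(p)$ are linearly dependent over $\C$. This puts us in the regime of Corollary~\textup{3.8} and Proposition~\textup{3.9}.

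The main technical step is to verify the non-degeneracy condition of Corollary~\textup{3.8}. Taking $W=(1,-\overline{\zeta})^T\in\C^2$, which spans $\{v\in\C^2\,|\,\langle v,p\rangle=0\}$, I would compute $H=\Hess_p(-i(\log g-(2/m)\log f))$ at $p=z_2(\zeta,1)$ entry by entry, repeatedly using $\zeta^m=1$. One finds that $\Hess_p(\log g-s\log f)$ is a rank-one outer product, proportional to $(\overline{\zeta},-1)^T(\overline{\zeta},-1)$, but $W$ is not in its kernel, so the scalar $W^T H W$ simplifies to $2mi\,\overline{\zeta}^2/z_2^2\neq 0$. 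By Corollary~\textup{3.8} every singular point is thus a fold, and since $n-1=1$, Proposition~\textup{3.9} gives index $1$ for every fold point.

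The main obstacle will be the bookkeeping in the Hessian computation. Since $\det H=0$ at $p$, one cannot shortcut past the explicit restriction to the complex hyperplane $p^\perp$; the cancellation that makes $W^T H W$ non-zero relies on the identities $\zeta^{m-1}=\overline{\zeta}$ and $\zeta^{m-2}=\overline{\zeta}^2$ throughout.
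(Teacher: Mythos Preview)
Your proposal is correct and follows essentially the same route as the paper's proof: identify the weights and $s=2/m$, apply Corollary~\textup{3.6} to obtain the singular set as $m$ circles, verify $i\,\grad\log f(p)=(mi/\varepsilon^2)p$, choose a spanning vector $W$ of $p^\perp$, compute the Hessian of $-i(\log g-s\log f)$ at $p$ and its restriction $W^T H W$, and then invoke Corollary~\textup{3.8} and Proposition~\textup{3.9}. The only cosmetic differences are that the paper parametrizes the singular circles as $p=\frac{\varepsilon e^{i\theta}}{\sqrt2}(1,\omega)$ with $\omega^m=1$ and takes $W=\frac{\varepsilon e^{i\theta}}{\sqrt2}(1,-\omega)^T$, which makes the final scalar come out as exactly $2mi$ rather than your $2mi\,\overline{\zeta}^2/z_2^2$; either way it is non-zero.
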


\begin{proof}
The two polynomials $f$ and $g$ are weighted homogeneous with weights
 $w_{f}=(m,m)$ and $w_{g}=(2,2)$, and we have $w_g=sw_f$ for $s=2/m$. Then we have
\begin{eqnarray*}
\Sing(\Phi)=\left\{\frac{\varepsilon e^{i\theta}}{\sqrt2}(1,\omega)\,\Big|\,
\omega^m=1,\ \theta \in \R\right\}
\end{eqnarray*}
by Corollary ~\textup{3.6}. For the singular point $p=\displaystyle 2^{-1/2}\varepsilon e^{i\theta}(1,\omega)$ of $\Phi$, we have
\begin{eqnarray*}
i\,\grad\log f(p)=\frac{mi}{\varepsilon^2}p,
\end{eqnarray*}
and hence, every singular point $p$ of $\Phi$ satisfies the hypothesis
 of Corollary ~\textup{3.8}. Furthermore, we can put
\begin{eqnarray*}
W=\frac{\varepsilon e^{i\theta}}{\sqrt2}
\begin{pmatrix}
1\\
-\omega
\end{pmatrix},
\end{eqnarray*}
since
\begin{eqnarray*}
\left\langle \frac{\varepsilon e^{i\theta}}{\sqrt2}
\begin{pmatrix}
1,\omega
\end{pmatrix}
,\frac{\varepsilon e^{i\theta}}{\sqrt2}
\begin{pmatrix}
1,-\omega
\end{pmatrix}
\right\rangle
=\frac{\varepsilon^2}{2}(1-\omega \overline{\omega})=0
\end{eqnarray*}
holds. By direct computation, we have
\begin{eqnarray*}
\Hess_{p}\left(-i(\log g-s\log f)\right)
=\frac{mie^{-2i\theta}}{\varepsilon^2\omega^2}\left(
\begin{array}{cc}
\omega^2 & -\omega\\
-\omega & 1
\end{array}\right),
\end{eqnarray*}
and hence,
$$
W^{T}\Hess_{p}\left(-i(\log g-s\log f)\right)W=2mi.
$$
Therefore, $\Phi$ is a fold map by Corollary 3.8.
 Furthermore, the indices of the fold points of $\Phi$ are all equal to $n-1$ by Proposition 3.9. 
\end{proof}
We have a result about the number of connected components of the singular point set $\Sing(\Phi)$.
\begin{prp} 
Let $f$ and $g$ be two homogeneous polynomials in $\C[z_1,z_2]$.
 Then, the singular point set of the Milnor fibration product map $\Phi:S_\varepsilon \setminus K_{fg}\to T^2$ associated with $f$ and $g$
 consists of at most $\deg f+\deg g-2$ circle components or the whole
 $S_\varepsilon \setminus K_{fg}$.
\end{prp}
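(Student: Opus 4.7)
The plan is to use Corollary~\textup{3.6} to reduce the characterization of $\Sing(\Phi)$ to a single polynomial equation in $z_1, z_2$, and then to exploit the fact that a homogeneous polynomial in two variables splits completely into linear factors over $\C$.

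First I would observe that two homogeneous polynomials $f, g \in \C[z_1, z_2]$ of degrees $d_f = \deg f$ and $d_g = \deg g$ are weighted homogeneous with weights $w_f = (d_f, d_f)$ and $w_g = (d_g, d_g)$, so $w_g = s w_f$ with $s = d_g/d_f$. Thus Corollary~\textup{3.6} applies, and $p \in S_\varepsilon \setminus K_{fg}$ lies in $\Sing(\Phi)$ if and only if
\[
h(p) := \frac{\partial f}{\partial z_1}(p) \frac{\partial g}{\partial z_2}(p) - \frac{\partial f}{\partial z_2}(p) \frac{\partial g}{\partial z_1}(p) = 0.
\]
Since each $\partial f / \partial z_j$ (resp.\ $\partial g / \partial z_j$) is homogeneous of degree $d_f-1$ (resp.\ $d_g-1$), the polynomial $h$ is homogeneous of degree $d_f + d_g - 2$.

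Next I would split into cases. If $h \equiv 0$, every point of $S_\varepsilon \setminus K_{fg}$ is singular, which yields the second alternative in the proposition. Otherwise $h$ is a non-zero homogeneous polynomial in two variables, and hence factors over $\C$ as $h = \prod_{j=1}^{N} L_j$ with $N = d_f + d_g - 2$, each $L_j$ a non-zero linear form. The zero set of $h$ in $\C^2$ is then the union of the at most $N$ distinct complex lines $\ell_j = \{L_j = 0\}$ through the origin, and each such line meets $S_\varepsilon$ in a single circle.

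The key observation is that $fg$ is itself homogeneous, so $\{fg = 0\} \subset \C^2$ is a finite union of complex lines through the origin. Hence for every complex line $\ell$ through $0$, either $\ell \subset \{fg = 0\}$ (in which case $\ell \cap S_\varepsilon \subset K_{fg}$ contributes nothing to $\Sing(\Phi)$) or $\ell \cap \{fg = 0\} = \{0\}$ (in which case $\ell \cap S_\varepsilon$ is a full circle lying entirely in $S_\varepsilon \setminus K_{fg}$). Distinct complex lines through the origin meet only at $0$, so the surviving circles are pairwise disjoint. Combining these observations, $\Sing(\Phi)$ is a disjoint union of at most $d_f + d_g - 2$ circles.

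The only step that is not immediate bookkeeping is the dichotomy in the preceding paragraph, namely ruling out that a circle $\ell \cap S_\varepsilon$ in the zero locus of $h$ could be cut into arcs by $K_{fg}$; this is guaranteed precisely by the homogeneity of $fg$, which forces $\{fg=0\}$ to be a union of complex lines rather than some more complicated algebraic curve.
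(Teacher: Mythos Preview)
Your proof is correct and follows essentially the same route as the paper: invoke Corollary~3.6 to identify $\Sing(\Phi)$ with the zero locus of the Jacobian-type polynomial $h$ on $S_\varepsilon \setminus K_{fg}$, factor $h$ into at most $\deg f+\deg g-2$ linear forms, and use the homogeneity of $f$ and $g$ to ensure each resulting circle in $S_\varepsilon$ lies either entirely inside or entirely outside $K_{fg}$. Your geometric phrasing (that $\{fg=0\}$ is itself a union of complex lines through the origin) is the same homogeneity argument the paper expresses via $f(e^{it}z)=e^{it\deg f}f(z)$.
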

\begin{proof}
The polynomial
$$R(z_1,z_2)=\frac{\partial f}{\partial z_1}\frac{\partial g}{\partial z_2}-\frac{\partial f}{\partial z_2}\frac{\partial g}{\partial z_1}$$
is a homogeneous polynomial with degree $d$, where $d=\deg f+\deg g-2$, or the zero polynomial.
 Every $2$-variable homogeneous polynomial can be written as a product of some homogeneous polynomials of degree one; i.e.\ there exist $\alpha_1,\ldots,\alpha_{d},\beta_1,\ldots,\beta_{d}\in \C$
 such that
$$
R(z_1,z_2)=(\alpha_1z_1-\beta_1z_2)\cdots(\alpha_{d}z_1-\beta_{d}z_2).
$$
If $R(z_1,z_2)$ is the zero polynomial, then we see that every point of
 $S_\varepsilon \setminus K_{fg}$ is a singular point of $\Phi$ by Corollary 3.6.
 If $R(z_1,z_2)$ is a non-zero polynomial, then by Corollary 3.6 $\Sing(\Phi)$ is contained in the set
\begin{eqnarray*}
L &=& \left\{(z_1,z_2)\in S_\varepsilon \,\Big|\,R(z_1,z_2)=0\right\} \\
&=& \left\{\frac{\varepsilon e^{i\theta}}{\left\|(\alpha_{j},\beta_{j})\right\|}(\beta_{j},\alpha_{j})\,\Big|\,j=1,2,\ldots,d,\,\theta\in\R\right\}.
\end{eqnarray*}
We have that $p\in L$ is an element of $S_\varepsilon \setminus K_{fg}$ if and only if $f(p)g(p)\neq 0$ holds. 
For every real number $t$, a point $e^{it}p$ is an element of $S_\varepsilon \setminus K_{fg}$ if and only if
 $p$ is an element of $S_\varepsilon \setminus K_{fg}$, since 
$$f(e^{it}z)=e^{it\,\deg f}f(z),\ g(e^{it}z)=e^{it\,\deg g}g(z)$$
 hold. Therefore, $\Sing(\Phi)$ consists of
 at most $d$ circle components.
\end{proof}

\section{Milnor fibration product maps into higher dimensional tori}
In this section, we extend some of the results obtained in Section ~\textup{3} to Milnor fibration product maps into the $m$-dimensional torus for general $m \geq 2$.

\begin{lmm}
Let $f_j, j=1,2,\ldots,m$, be polynomials in $\C[z_1,z_2,\ldots,z_n]$ with $f_{j}(0)=0$ for all $j$. A point $p\in S_\varepsilon \setminus K_{f_1 f_2 \cdots f_m}$ is a singular point
 of the Milnor fibration product map $\Phi:S_\varepsilon \setminus K_{f_1 f_2 \cdots f_m}\to T^m$
 if and only if the $m+1$ vectors
$$p,i\,\grad\log f_1(p),i\,\grad\log f_2(p),\ldots, i\,\grad\log f_m(p)$$
 are linearly dependent over $\R$.
\end{lmm}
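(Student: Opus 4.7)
The plan is to imitate the proof of Lemma~\textup{3.1}, replacing the pair $(f,g)$ by the general tuple $(f_1, f_2, \ldots, f_m)$ and adapting the rank criterion accordingly. First I would compute the differential $d\Phi_p : T_p(S_\varepsilon \setminus K_{f_1 f_2 \cdots f_m}) \to T_{\Phi(p)} T^m$ componentwise. Since each component of $\Phi$ is the Milnor fibration map $\varphi_{f_j}$ from Lemma~\textup{3.2}, the same differentiation carried out in the $m=2$ case of Lemma~\textup{3.1} yields
\begin{eqnarray*}
d\Phi_{p}(v) = \left(i\Re\left\langle v,i\,\grad\log f_j(p)\right\rangle \frac{f_j(p)}{|f_j(p)|}\right)_{j=1}^m
\end{eqnarray*}
for every $v \in T_p(S_\varepsilon \setminus K_{f_1 f_2 \cdots f_m})$.

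Next, I would translate the condition $\rank d\Phi_p < m$ into the existence of a nonzero vector $(\beta_1, \beta_2, \ldots, \beta_m) \in \R^m$ such that
\begin{eqnarray*}
\Re\left\langle v, \sum_{j=1}^m \beta_j\, i\,\grad\log f_j(p)\right\rangle = 0
\end{eqnarray*}
for every tangent vector $v$. Using the same characterization of the tangent space as in the proof of Lemma~\textup{3.1}, namely $v \in T_p(S_\varepsilon \setminus K_{f_1 f_2 \cdots f_m})$ if and only if $\Re\langle v, p\rangle = 0$, this vanishing condition forces $\sum_{j=1}^m \beta_j\, i\,\grad\log f_j(p)$ to be a real scalar multiple of $p$. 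Hence there exists $\alpha \in \R$ with
\begin{eqnarray*}
\alpha p + \sum_{j=1}^m \beta_j\, i\,\grad\log f_j(p) = 0,
\end{eqnarray*}
which is a nontrivial real linear relation among the $m+1$ vectors in question.

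For the converse, suppose such a nontrivial relation holds with coefficients $(\alpha, \beta_1, \ldots, \beta_m) \in \R^{m+1}\setminus\{0\}$. If all $\beta_j$ vanished, then $\alpha p = 0$ would force $\alpha = 0$ since $p \neq 0$, contradicting nontriviality; hence some $\beta_j$ is nonzero. Taking the real part of the Hermitian inner product of this relation with an arbitrary $v \in T_p(S_\varepsilon \setminus K_{f_1 f_2 \cdots f_m})$ and using $\Re\langle v, p\rangle = 0$ yields $\sum_j \beta_j \Re\langle v, i\,\grad\log f_j(p)\rangle = 0$, so the image of $d\Phi_p$ is contained in the real hyperplane of $T_{\Phi(p)}T^m$ cut out by $(\beta_1, \ldots, \beta_m)$ and $\rank d\Phi_p < m$.

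I do not expect a major obstacle: the argument is a straightforward extrapolation of the $m=2$ case, the only minor care needed being to rule out the degenerate possibility $\beta_1 = \cdots = \beta_m = 0$ in the reverse direction, which is handled by $p \neq 0$.
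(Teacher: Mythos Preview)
Your proposal is correct and follows essentially the same approach as the paper's own proof: compute $d\Phi_p$ componentwise, rephrase the rank-drop as the existence of a nonzero $(\beta_1,\ldots,\beta_m)$ annihilating the image, and use the description of $T_p S_\varepsilon$ to convert this into a real linear relation with $p$. Your explicit handling of the degenerate case $\beta_1=\cdots=\beta_m=0$ in the converse direction is a detail the paper leaves implicit, but otherwise the arguments coincide.
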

\begin{proof}
As in the proof of Lemma \ref{31}, for $p\in S_\varepsilon \setminus K_{f}$, the differential $d\Phi_{p}:T_{p}(S_\varepsilon \setminus K_{f})\to T_{\Phi(p)}T^m$ satisfies
\begin{center}
$
\displaystyle
d\Phi_{p}(v)=\left(i\Re\langle v,i\,\grad\log f_1
(p)\rangle\frac{f_1(p)}{|f_1(p)|},
\ldots,i\Re\langle v,i\,\grad\log f_m(p)\rangle
\frac{f_m(p)}{|f_m(p)|}\right),
$
\end{center}
for $v$ in $T_{p}(S_\varepsilon \setminus K_{f})$.
Therefore, $p$ is a singular point of $\Phi$ if and only if there exists
 $(\beta_1,\beta_2,\ldots,\beta_m)\in \R^m\setminus\left\{(0,0,\ldots,0)\right\}$
 such that for every $v\in T_{p}(S_\varepsilon \setminus K_{f})$,
\begin{eqnarray*}
\Re\left\langle v,\sum_{j=1}^m \beta_{j}i\,\grad\log f_{j}(p)\right\rangle
=\sum_{j=1}^m\beta_{j}\Re\left\langle v,i\,\grad\log f_{j}(p)\right\rangle=0
\end{eqnarray*}
holds. In this case, $p$ and $\Sigma_{j=1}^m \beta_{j}i\,\grad\log f_{j}(p)$ are linearly dependent over $\R$. Consequently $p,i\,\grad\log f_1(p),i\,\grad\log f_2(p),\ldots,i\,\grad\log f_m(p)$ are linearly dependent over $\R$. Conversely if $p,i\,\grad\log f_1(p),i\,\grad\log f_2(p),\ldots,i\,\grad\log f_m(p)$ 
are linearly dependent over $\R$, then we have $\rank\,d\Phi_{p}<m$. Therefore, $p$ is a singular point of $\Phi$.
\end{proof}

\begin{prp}
For $j\in\{1,2,\ldots,m\}$,
let $f_{j}\in\C[z_{k_{j-1}+1},z_{k_{j-1}+2},\ldots,
z_{k_{j}}]$ be a polynomial with $f_{j}(0)=0$, 
 where $0=k_0<k_1<\cdots<k_m=n$.
 We regard $f_1,f_2,\ldots,f_m$ as polynomials in
 $\C\left[z_1,z_2,\ldots,z_n\right]$.
 For $\varepsilon>0$ sufficiently small,
 the Milnor fibration product map
 $\Phi:S_\varepsilon \setminus K_{f_1f_2\cdots f_m}\to T^m$
 associated with $f_1,f_2,\ldots,f_m$ has no singular point.
\end{prp}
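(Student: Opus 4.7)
My plan is to mimic the proof of Proposition~3.3 directly, using Lemma~5.1 (just proved) in place of Lemma~3.1 and applying Milnor's Lemma~3.2 blockwise to each of the $m$ pairwise disjoint variable groups.

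First, I would choose $\varepsilon>0$ small enough that, for every $j=1,2,\ldots,m$ and every $0<\delta\leq\varepsilon$, the map $\varphi_{\hat{f}_j}=\hat{f}_j/|\hat{f}_j|$ from $S_\delta^{2(k_j-k_{j-1})-1}\setminus K_{\hat{f}_j}\subset\C^{k_j-k_{j-1}}$ to $S^1$ has no singular points, where $\hat{f}_j$ denotes $f_j$ regarded as a polynomial in its own variables $z_{k_{j-1}+1},\ldots,z_{k_j}$. This is possible by Lemma~3.2 applied $m$ times.

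Next, by Lemma~5.1, it suffices to show that if $p\in S_\varepsilon\setminus K_{f_1f_2\cdots f_m}$ and $(\alpha,\beta_1,\ldots,\beta_m)\in\R^{m+1}$ satisfies
$$\alpha p+\sum_{j=1}^m\beta_j\,i\,\grad\log f_j(p)=0,$$
then $(\alpha,\beta_1,\ldots,\beta_m)=0$. The key point is the disjoint variable structure: each $\grad\log f_j(p)$ has non-zero entries only in positions $k_{j-1}+1,\ldots,k_j$. Writing $p_{(j)}=(p_{k_{j-1}+1},\ldots,p_{k_j})$ and projecting the identity above onto each block, one obtains, for every $j$,
$$\alpha p_{(j)}+\beta_j\,i\,\grad\log\hat{f}_j(p_{(j)})=0.$$
Since $p\notin K_{f_1\cdots f_m}$ implies $\hat{f}_j(p_{(j)})\neq 0$ for all $j$, we have $p_{(j)}\neq 0$ and hence $0<|p_{(j)}|\leq\varepsilon$. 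By the choice of $\varepsilon$, $p_{(j)}$ is a regular point of $\varphi_{\hat{f}_j}:S_{|p_{(j)}|}^{2(k_j-k_{j-1})-1}\setminus K_{\hat{f}_j}\to S^1$, so by Lemma~3.2 the vectors $p_{(j)}$ and $i\,\grad\log\hat{f}_j(p_{(j)})$ are linearly independent over $\R$. Therefore $\alpha=0$ and $\beta_j=0$, for every $j=1,2,\ldots,m$, proving linear independence of $p,i\,\grad\log f_1(p),\ldots,i\,\grad\log f_m(p)$ over $\R$, and hence regularity of $p$ by Lemma~5.1.

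There is no substantive obstacle: the whole argument is a clean iteration of Proposition~3.3, since the disjointness of the variable groups is exactly what decouples the single linear-dependence relation in $\C^n$ into $m$ independent relations, one for each block, each of which is then killed by Milnor's single-polynomial lemma. The only care needed is to pick $\varepsilon$ small enough to handle all $m$ blocks simultaneously, which is immediate because $m$ is finite.
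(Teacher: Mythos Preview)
Your proposal is correct and follows essentially the same approach as the paper's proof: choose $\varepsilon$ uniformly for all $m$ blocks via Lemma~3.2, project a putative linear dependence relation onto each variable block, and kill each pair $(\alpha,\beta_j)$ using the regularity of $\varphi_{\hat{f}_j}$ on the appropriate small sphere. The paper's proof is organized in exactly this way.
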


\begin{proof}
Let $\hat{f_j}$ be the polynomial $f_j,j=1,2,\ldots,m$, considered as
 an element of $\C[z_{k_{j-1}+1}, z_{k_{j-1}+2}, \ldots, z_{k_{j}}]$.
 Let $\varepsilon$ be a small positive real number such that for every positive real number
 $\delta\leq\varepsilon$,
\begin{eqnarray*}
\varphi_{\hat{f_j}}=\frac{\hat{f_j}}{|\hat{f_j}|}:S_{\delta}^{2(k_j-k_{j-1})-1}\setminus K_{\hat{f_j}}\to S^1, j=1,2,\ldots,m
\end{eqnarray*}
have no singular points, where $S_{\delta}^{2(k_j-k_{j-1})-1}\subset \C^{k_j-k_{j-1}}$, $j=1,2,\ldots,m$, are the spheres of radius $\delta$
centered at the origins. Let $p=(p_1,p_2,\ldots,p_n)$ be a point in $S_\varepsilon \setminus K_{f_1f_2\cdots f_m}$
 satisfying, for $(\alpha,\beta_1,\beta_2,\ldots,\beta_m)\in \R^{m+1}$,
$$\alpha p+\sum_{j=1}^m\beta_j i\,\grad\log f_j(p)=0.$$
Set $P_j=(p_{k_{j-1}+1},p_{k_{j-1}+2},\ldots,p_{k_{j}}),j=1,2,\ldots,m$. Then $P_1,P_2,\ldots,P_m$ satisfy the equations
\begin{eqnarray*}
\alpha P_j+\frac{\beta_{j}i}{\ \overline{f_j(P_j)}\ }\left(\overline{\frac{\partial f_{j}(P_j)
}{\partial z_{k_{j-1}+1}}},\overline{\frac{\partial f_{j}(P_j)
}{\partial z_{k_{j-1}+2}}},\ldots,
\overline{\frac{\partial f_{j}(P_j)}{\partial z_{k_{j}}}}\right)=0.
\end{eqnarray*}
By the assumption on $\varepsilon$, $P_j$ is a regular point of
 $\varphi_{\hat{f_{j}}}$,
 and hence, by Lemma \ref{32}, we have $(\alpha,\beta_{j})=(0,0)$
 for every $j=1,2,\ldots,m$.
 Therefore, the $m+1$ vectors $p,i\,\grad\log f_1(p),\ldots,i\,\grad \log f_m(p)$ are
 linearly independent over $\R$, i.e.\ $p$ is a regular point of $\Phi$.
\end{proof}
\begin{prp}
Let $f_1,f_2,\ldots,f_m$, $2\leq m\leq 2n-1$, be weighted homogeneous polynomials in
 $\C[z_1,z_2,\ldots,z_n]$ with weights
 $w_{f_1}, w_{f_2}, \ldots, w_{f_m}$, respectively. Suppose that $w_{f_j} = s_j w_{f_1}$ for some positive integer $s_j$ for $j = 1, 2, \ldots, m$.
 If $p \in S_\varepsilon \setminus K_{f_1f_2 \cdots f_m}$ is a singular point of
 the Milnor fibration product map
 $\Phi:S_\varepsilon \setminus K_{f_1f_2\cdots f_m}\to T^m$
 associated with $f_1,f_2,\ldots,f_m$, then we have that the $m$-vectors
\begin{eqnarray*}
i\,\grad\log f_1(p),\ldots,i\,\grad\log f_m(p)
\end{eqnarray*}
are linearly dependent over $\C$.
\end{prp}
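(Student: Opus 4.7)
The plan is to generalize the argument of Corollary~3.6 to $m$ polynomials, using Lemma~5.1 in place of Lemma~3.1. Since $p$ is a singular point of $\Phi$, Lemma~5.1 furnishes a nonzero real tuple $(\alpha, \beta_1, \ldots, \beta_m) \in \R^{m+1}$ satisfying
$$\alpha p + \sum_{j=1}^m \beta_j \, i\,\grad \log f_j(p) = 0.$$
It suffices to force $\alpha = 0$ while showing that at least one $\beta_j$ is nonzero; the remaining relation then exhibits $i\,\grad \log f_1(p), \ldots, i\,\grad \log f_m(p)$ as $\R$-linearly dependent, hence a fortiori as $\C$-linearly dependent, which is the desired conclusion.

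Writing $w_{f_1} = (w_1, \ldots, w_n)$ and $w(z) = (z_1/w_1, \ldots, z_n/w_n)$, I would first verify the identity
$$\langle w(p), \grad \log f_j(p) \rangle = s_j$$
for $j = 1, 2, \ldots, m$ by exactly the same direct computation as in the proof of Corollary~3.6: the hypothesis $w_{f_j} = s_j w_{f_1}$ together with the definition of weighted homogeneity gives $\sum_k (z_k/w_k)\,\partial f_j/\partial z_k = s_j f_j$, from which the identity follows by dividing by $f_j(p)$. Pairing the linear relation above with $w(p)$ under the Hermitian inner product then yields
$$\alpha \langle w(p), p \rangle - i \sum_{j=1}^m s_j \beta_j = 0.$$

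Since $\langle w(p), p \rangle = \sum_k |p_k|^2/w_k$ is a strictly positive real number, whereas $\sum_j s_j \beta_j$ is real, separating real and imaginary parts forces $\alpha = 0$. If now every $\beta_j$ were zero, the tuple $(\alpha, \beta_1, \ldots, \beta_m)$ would vanish, contradicting the choice from Lemma~5.1. Hence some $\beta_j$ is nonzero, and the relation $\sum_{j=1}^m \beta_j\, i\,\grad \log f_j(p) = 0$ is a nontrivial $\R$-linear, and therefore $\C$-linear, dependence. The argument is essentially the $m = 2$ case of Corollary~3.6 written out in full generality; the only point deserving attention is that the common proportionality $w_{f_j} = s_j w_{f_1}$ is precisely what makes the $w(p)$-pairing of each $i\,\grad \log f_j(p)$ purely imaginary, which is the ingredient that permits the clean real/imaginary separation. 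I do not anticipate any genuine obstacle beyond this routine bookkeeping.
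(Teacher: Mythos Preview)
Your proposal is correct and follows essentially the same approach as the paper: invoke Lemma~5.1 to obtain a nontrivial real relation, pair with the weight vector $w(p)$ to force $\alpha=0$ via the real/imaginary split, and conclude $\R$- (hence $\C$-) linear dependence of the gradients. Your explicit remark that some $\beta_j$ must remain nonzero is a small clarification the paper leaves implicit, but otherwise the arguments coincide.
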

\begin{proof}
By Lemma ~\textup{5.1}, there exists $(\alpha,\beta_1,\beta_2,\ldots,\beta_m)\in \R^{m+1}\setminus\left\{(0,0,\ldots,0)\right\}$ such that
$$
\alpha p+\sum_{j=1}^m\beta_{j}i\,\grad\log f_{j}(p)=0.
$$
From the definition of weighted homogeneous polynomials, it follows that
\begin{equation*}
\left\langle w(z),\grad\log f_{j}\right\rangle
=\frac{1}{f_{j}}\sum_{k=1}^n\frac{z_{k}}{a_{k}}
\frac{\partial f_{j}}{\partial z_{k}}
=\frac{s_{j}}{f_{j}}\sum_{k=1}^n\frac{z_{k}}{s_{j}a_{k}}
\frac{\partial f_{j}}{\partial z_{k}}=s_{j},\ z\in \C^n\setminus f_j^{-1}(0)
\end{equation*}
for $j=1,2,\ldots,m$, where $w_{f_1}=(a_1,a_2,\ldots,a_n)$ and
\begin{eqnarray*}
w(z)=\left(\frac{z_1}{a_1},\frac{z_2}{a_2},\ldots,\frac{z_n}{a_n}\right).
\end{eqnarray*}
Therefore, we have
\begin{center}
$\alpha\left\langle w(p),p\right\rangle-i\displaystyle \sum_{j=1}^m\beta_{j}s_{j}=\left\langle w(p),
\alpha p+\sum_{j=1}^m\beta_{j}i\,\grad\log f_{j}(p)\right\rangle=0.$
\end{center}
Then we have $\alpha=0$ by comparing the real part of the both sides of this equation.
 Therefore, $i\,\grad\log f_1(p),i\,\grad\log f_2(p),\ldots,i\,\grad\log f_m(p)$
 are linearly dependent over $\R$. In particular, $i\,\grad\log f_1(p),
i\,\grad\log f_2(p),\ldots,i\,\grad\log f_m(p)$ are linearly dependent over 
$\C$.
\end{proof}


%

\end{document}